\documentclass[11pt,a4paper]{article}

\usepackage[margin=1in]{geometry}
\usepackage[T1]{fontenc}
\usepackage[utf8]{inputenc}
\usepackage{lmodern}
\usepackage{microtype}
\usepackage{amsmath,amssymb,amsthm}
\usepackage{mathtools,bm}
\numberwithin{equation}{section}

\usepackage{graphicx}
\usepackage{epstopdf}
\DeclareGraphicsExtensions{.pdf,.png,.jpg,.jpeg,.eps}
\usepackage{xcolor}
\usepackage{array,colortbl,booktabs,multirow}
\usepackage{arydshln}

\usepackage[ruled,lined,longend,algo2e]{algorithm2e}
\usepackage{hyperref}
\hypersetup{
  colorlinks=true,
  allcolors=green!50!black,
  urlcolor=red!50!black,
  pdftitle={A nonmonotone globalization framework for Anderson acceleration for contractive and nonexpansive fixed point problems},
  pdfauthor={Zekai Li and Wei Bian}
}
\usepackage[nameinlink,noabbrev]{cleveref}

\theoremstyle{plain}
\newtheorem{theorem}{Theorem}[section]

\newtheorem{proposition}[theorem]{Proposition}

\theoremstyle{definition}

\newtheorem{assumption}[theorem]{Assumption}

\theoremstyle{remark}
\newtheorem{remark}[theorem]{Remark}

\crefname{theorem}{Theorem}{Theorems}
\crefname{lemma}{Lemma}{Lemmas}
\crefname{proposition}{Proposition}{Propositions}
\crefname{corollary}{Corollary}{Corollaries}
\crefname{claim}{Claim}{Claims}
\crefname{definition}{Definition}{Definitions}
\crefname{example}{Example}{Examples}
\crefname{assumption}{Assumption}{Assumptions}
\crefname{hypothesis}{Hypothesis}{Hypotheses}
\crefname{fact}{Fact}{Facts}
\crefname{remark}{Remark}{Remarks}
\crefname{algocf}{Algorithm}{Algorithms}

\let\oldbreve\u

\renewcommand{\u}{\mathbf{u}}
\renewcommand{\v}{\mathbf{v}}

\renewcommand{\b}{\mathbf{b}}
\newcommand{\y}{\mathbf{y}}
\renewcommand{\d}{\mathbf{d}}
\renewcommand{\r}{\mathbf{r}}

\SetKwComment{RemarkComment}{\(\triangleright\)\ }{}

\newenvironment{rightequation}
  {\begin{equation}}
  {\end{equation}}

\newenvironment{keywords}
  {\par\smallskip\noindent\textbf{Keywords:}\enspace\ignorespaces}
  {\par\smallskip\ignorespacesafterend}
\newenvironment{AMS}
  {\par\smallskip\noindent\textbf{Mathematics Subject Classification (2020):}\enspace\ignorespaces}
  {\par\smallskip\ignorespacesafterend}

\title{A nonmonotone globalization framework for Anderson
acceleration for contractive and nonexpansive fixed point problems%
}

\author{
Wei Bian\thanks{\raggedright Corresponding author. School of Mathematics,
Harbin Institute of Technology, Harbin, China
(\href{mailto:bianweilvse520@163.com}{\texttt{bianweilvse520@163.com}}).}
\and
Zekai Li\thanks{\raggedright School of Mathematics,
Harbin Institute of Technology, Harbin, China
(\href{mailto:zeeklc_op@163.com}{\texttt{zeeklc\_op@163.com}}).}
}
\date{}

\begin{document}
\maketitle

% ==================== 从这里放入原稿内容 ====================
% 下面的英文仅为占位内容；请用原稿摘要、关键词和正文替换。
% 不需要再次粘贴原稿的 documentclass、导言区或 begin{document}。

\begin{abstract}
Anderson acceleration (AA) is an effective technique for accelerating fixed point iterations, but it generally lacks global convergence guarantees. We propose a nonmonotone globalized Anderson acceleration framework that retains the local acceleration of AA while ensuring global convergence. For contractive mappings, 
without requiring prior knowledge of the contraction factor,
we prove that the proposed method reduces exactly to pure AA after finitely many iterations and enjoys global $r$-linear convergence for memory size $m\geq1$ and global residual $q$-linear convergence for $m=1$, with convergence factors no greater than the contraction factor of the underlying fixed point mapping. For nonexpansive mappings, we prove that the fixed point residuals converge globally to zero. A class-agnostic parameter selection rule is further developed to ensure these convergence properties in both settings. Numerical experiments demonstrate the robustness and efficiency of the proposed method.
\end{abstract}

\begin{keywords}
Anderson acceleration, nonmonotone globalization, fixed point problem, linear convergence, nonexpansive mapping.
\end{keywords}

% 如需 MSC 分类，可原样粘贴原稿的 
\begin{AMS}
65H10, 65B05, 65K10
\end{AMS}

\section{Introduction}
In this paper, we consider the following fixed point problem: 
\begin{equation*}
    \mbox{Find} \,\,\u \in \mathbb{R}^n \,\, \mbox{such that} \,\, \u = G(\u),
\end{equation*}
where $G:\mathbb{R}^n \to \mathbb{R}^n$ is a Lipschitz continuous mapping and we denote the residual mapping of $G$ by $F(\u):=G(\u)-\u$. 
% {\color{red}Throughout this paper, $\|\cdot\|$ denotes the Euclidean norm.}
Fixed point problems arise naturally in a wide range of scientific and engineering applications, including nonlinear equations, optimization problems, and variational inequalities. Many widely used numerical methods can be formulated within the fixed point framework. For instance, gradient descent and proximal gradient methods, Douglas--Rachford splitting, and various nonlinear PDE solvers can all be interpreted as fixed point iterations associated with suitably defined mappings.
A basic approach to solving such problems is the Picard iteration:
\begin{equation*}
  \u_{k+1}=G(\u_k).
\end{equation*}
When $G$ is contractive with factor $c \in (0,1)$, the Banach fixed point theorem guarantees that the Picard iteration converges globally $q$-linearly to the unique fixed point. For nonexpansive mappings, one commonly employs suitably averaged iterations, such as the Krasnosel'ski\oldbreve{\i}--Mann iteration:
\begin{equation*}
    \u_{k+1}=(1-\lambda_k)\u_k+\lambda_k G(\u_k),
\end{equation*}
where $\lambda_k\in(0,1]$. Despite their simplicity and robustness, these basic fixed point methods often converge prohibitively slowly in practice.

Anderson acceleration (AA) \cite{Anderson, anderson2019comments} is a broadly applicable approach for accelerating fixed point iterations, originally introduced by Anderson to solve nonlinear integral equations. Owing to its effectiveness, AA has been widely applied in many fields, including self-consistent field iterations in electronic structure calculations \cite{banerjee2016periodic, chupin2021convergence, pulay1980convergence, walker2011anderson}, tensor decomposition \cite{sterck2012nonlinear, sterck2021asymptotic}, geometry optimization and physics simulation problems \cite{peng2018anderson}, among others \cite{an2017anderson, both2019anderson, lipnikov2013anderson, mai2020anderson, matveev2018anderson, pollock2019anderson, pratapa2016anderson}. At each iteration, let $m_k=\min\{m, k\}$. AA forms the new iterate by an affine combination of the most recent $m_k+1$ iterates, with the coefficients chosen to minimize the norm of the corresponding affine combination of residuals.
More precisely, pure AA computes $\u_{k+1}$ as follows:
\begin{equation}\label{eq_pureAA}
    \u_{k+1}=\sum\nolimits_{j=0}^{m_k}\alpha_j^kG(\u_{k-m_k+j}),
\end{equation}
where the coefficients \(\boldsymbol{\alpha}^k=(\alpha_0^k,\ldots,\alpha_{m_k}^k)^\top\) are obtained by solving
\begin{equation}\label{eq_subprob}
\min_{\boldsymbol{\alpha}\in\mathbb{R}^{m_k+1}}
\left\|
\sum\nolimits_{j=0}^{m_k}\alpha_jF(\u_{k-m_k+j})
\right\|
\quad\text{s.t.}\quad
\sum\nolimits_{j=0}^{m_k}\alpha_j=1.
\end{equation}

From a theoretical perspective, AA is closely related to multisecant quasi-Newton methods \cite{fang2009two} and, for linear problems, to GMRES \cite{walker2011anderson}. These features, together with its favorable numerical performance, have led to its successful application in a wide range of scientific computing problems. Despite its practical success, the convergence behavior of AA is considerably less well understood than that of the underlying fixed point iteration. Toth and Kelley \cite{TothKelley2015} established the first local convergence results for AA, proving local $r$-linear convergence for Lipschitz continuously differentiable contractive mappings under bounded Anderson coefficients. Chen and Kelley \cite{chen2019convergence} further relaxed the differentiability condition to continuous differentiability and obtained the same convergence results. 
% Wei et al. \cite{wei2022class} proposed a novel short-term recurrence variant of AA to reduce the memory overhead and established its local linear convergence for contractive mappings that are Lipschitz continuously differentiable. 
Wei et al. \cite{wei2022class} established local linear convergence of a short-term recurrence AA for smooth contractions.
Bian et al. \cite{bian2022anderson, bian2021anderson} further extended the theoretical results to a class of nonsmooth mappings and established analogous $r$-linear convergence results. 
Evans et al. \cite{evans2020proof} developed a one-step analysis showing AA can improve convergence rates for contractive mappings through a gain factor. Pollock and Rebholz \cite{pollock2021anderson} extended this analysis to noncontractive mappings, clarifying the role of higher-order perturbation terms.
% Evans et al. \cite{evans2020proof} proposed a one-step analysis of AA for contractive mappings, demonstrating that AA can actually improve the convergence rate with a gain factor. Pollock and Rebholz \cite{pollock2021anderson} further extended this gain factor analysis to a class of noncontractive mappings and clarified the balance between the acceleration gain and the higher-order perturbation terms.
De Sterck and He \cite{sterck2021asymptotic,de2022linear} analyzed asymptotic convergence factors of AA, while Krzysik et al. \cite{krzysik2025asymptotic} established improved factors for restarted AA(1) on symmetric and skew-symmetric linear mappings.

A major technical challenge associated with the original AA is that the method may become unstable or stagnate in practice. Existing stabilization and globalization strategies mainly follow two directions. The first employs safeguarding or restarting mechanisms and falls back to a globally convergent base iteration when the AA step is deemed unreliable. The second regularizes, constrains, or filters the least-squares subproblem \eqref{eq_subprob} to control the Anderson coefficients and improve its numerical conditioning. Chen and Kelley \cite{chen2019convergence} proposed the EDIIS method, which constrains the Anderson coefficients to be nonnegative and guarantees global $r$-linear convergence for contractive mappings. Zhang et al. \cite{zhang2020globally} developed a globally convergent Type-I AA method for nonsmooth nonexpansive mappings by combining safeguarding, Powell-type regularization, and restarting. Fu et al. \cite{fu2020anderson} incorporated regularized AA into Douglas--Rachford splitting and employed a residual-based safeguard to recover the convergence guarantees of the underlying splitting method. Although these safeguarded schemes guarantee global convergence for nonexpansive mappings, their analyses remain qualitative and yield no explicit convergence factors.
Mai and Johansson \cite{mai2020anderson} globalized AA for proximal gradient methods by accepting an accelerated point only when it satisfies a sufficient objective-decrease condition. Ouyang et al. \cite{ouyang2024descent} further developed a nonmonotone function value-based globalization for restarted AA applied to gradient methods. These strategies, however, rely on the availability and descent properties of an objective function and are therefore not directly applicable to general fixed point problems. Ouyang et al. \cite{ouyang2023nonmonotone} proposed a nonmonotone globalization framework based on adaptive regularization and the ratio between the actual and predicted residual reductions. However, in the contractive setting, their theoretical guarantees require prior knowledge of the contraction factor of the fixed point mapping, which is generally unavailable in practice.
In addition, regularization, restarting, and filtering strategies have been developed to control the Anderson coefficients and improve the conditioning of the least-squares subproblems \cite{pollock2023filtering, scieur2020regularized}.

The main contribution of this paper is the development of a globalized Anderson acceleration framework that combines global convergence guarantees with local acceleration. Moreover, the proposed framework requires little problem-specific information or parameter tuning and no prior knowledge of the contraction factor, thereby enhancing its practical applicability. For contractive mappings, we prove that the proposed algorithm reduces exactly to pure AA after finitely many iterations. We further establish global residual and iterate $r$-linear convergence for $m \ge 1$, and global residual $q$-linear convergence for $m=1$, with convergence factors no greater than the contraction factor of the fixed point mapping. Compared with the method proposed in~\cite{ouyang2023nonmonotone}, our algorithm has two advantages: it requires no prior knowledge of the contraction factor and eventually reduces to pure AA rather than regularized AA. For nonexpansive mappings, we establish the global convergence of the residual sequence to zero, complementing the global convergence results in~\cite{fu2020anderson, zhang2020globally}. 
Beyond this global residual convergence result, our framework retains linear convergence in the contractive setting without introducing a regularization term into the subproblem~\eqref{eq_subprob}.
Finally, we construct a class-agnostic parameter selection rule that guarantees the required convergence properties in both the contractive and nonexpansive settings.
These results demonstrate the generality of the proposed framework across both contractive and nonexpansive settings.

This work is organized as follows. In section \ref{sec2}, we present the proposed globalized Anderson acceleration algorithm and establish its convergence properties. Specifically, in subsection \ref{sec2.1}, we establish the global $r$-linear convergence of the proposed algorithm for contractive mappings with $m\geq1$, while subsection \ref{sec2.2} strengthens this result to global residual $q$-linear convergence for $m=1$. In subsection \ref{sec2.3}, we show that the residual sequence converges globally to zero for nonexpansive mappings. In subsection \ref{sec2.4}, we propose a class-agnostic strategy that ensures global residual convergence for nonexpansive mappings while preserving linear convergence in the contractive case. Finally, in section \ref{sec3}, we present numerical experiments to validate the effectiveness of the proposed algorithm, illustrating its globally convergent behavior, in contrast to conventional Anderson acceleration, as well as its robustness and competitive numerical performance relative to other acceleration methods.
% superiority over other acceleration methods.
\section{Global Anderson Acceleration Algorithm}\label{sec2}
This section provides the details of the proposed
global Anderson acceleration algorithm, together with the necessary preliminary concepts, assumptions and main theoretical results. 
We refer to the proposed algorithm as the Global Anderson($m$) algorithm, abbreviated as GlobAA($m$), and present it in Algorithm~\ref{glo-Anderson}.
We establish its global $r$-linear convergence for contractive mappings in \cref{sec2.1}, global residual $q$-linear convergence for $m=1$ in \cref{sec2.2}, and global residual convergence for nonexpansive mappings in \cref{sec2.3}. Moreover, we give a class-agnostic strategy for contractive and nonexpansive mappings in \cref{sec2.4}.

\begin{algorithm2e}[htbp]
\caption{Global Anderson($m$) Algorithm }\label{glo-Anderson}
\DontPrintSemicolon
\KwIn{$\u_0\in \mathbb{R}^n$, a positive integer $m$, and an update strategy $\mathcal{S}$.}
Given $\lambda \in (0,1]$; set $k =0 $ and $ \kappa = 0$. \\
\While{$\|F(\u_k)\|>0$}{
  choose $m_k = \min\{m,\; k-\kappa\}$;\\
  set ${G}_k = {G}(\u_k)$ and ${F}_k = {G}_k-\u_k$; \\
  choose \(\gamma_k\in(0,1)\) by strategy $\mathcal{S}$; \RemarkComment*[r]{\bf{\texttt{see Remark~\ref{remark1}}}}
  solve for $\{\alpha_j^k: j=0,\ldots,m_k\}$ by
    \begin{equation}\label{eq_coff}
      \min \left\|\sum\nolimits_{j=0}^{m_k}\alpha_j^k {F}_{k-m_k+j}\right\|
      \quad \text{s.t.}\quad \sum\nolimits_{j=0}^{m_k}\alpha_j^k = 1 ;
    \end{equation} 
    set
    % \vspace{-1mm}
    \begin{equation}\label{eq_AA}
        \u_{\rm AA}^{k+1} = \sum\nolimits_{j=0}^{m_k}\alpha_j^k\, {G}_{k-m_k+j};
    \end{equation}
    
    \uIf{ 
    \begin{rightequation}\label{global-con}
        \|{F}(\u_{\rm AA}^{k+1})\|\le \gamma_k \max_{0\leq j\leq m_k} \|F_{k-m_k+j}\| \tag{\textbf{\textsf{AC}}}
    \end{rightequation}
    }{
    \tcp{\bf{\texttt{Anderson step}}}
    set
    \begin{rightequation}\label{A}
        \u_{k+1} = \u_{\rm AA}^{k+1}; \tag{\textbf{\textsf{A}}}
    \end{rightequation}
  }\Else{
    \tcp{\bf{\texttt{Krasnosel'ski\oldbreve{\i}--Mann step}}}
    set
    \begin{rightequation}\label{K}
        \u_{k+1} = (1-\lambda) \u_k + \lambda{G}_k,\, \kappa = k+1; \tag{\textbf{\textsf{K}}}
    \end{rightequation}
  }
  $k=k+1.$
}
\end{algorithm2e}

\begin{remark}\label{remark1}
    Different assumptions are imposed on $\gamma_k$ depending on the properties of the underlying fixed point mapping $G$, as specified in the subsequent subsections.
    Parameter $\kappa$ primarily controls the memory parameter $m_k$, ensuring that $m_k$ is reset after each Krasnosel'ski\oldbreve{\i}--Mann step. Moreover, for contractive mappings, we allow $\lambda\in(0,1]$, whereas for nonexpansive mappings, we require $\lambda\in(0,1)$.
\end{remark}

It is easy to see that, at each iteration, GlobAA($m$) executes exactly one of the two processes \eqref{A} and \eqref{K}. We denote by $\mathbb{N}_{\ref{A}}$ and $\mathbb{N}_{\ref{K}}$ the sets of iteration indices at which processes \eqref{A} and \eqref{K} are performed, respectively.

In what follows, without loss of generality, we assume that the stopping criterion is never satisfied; otherwise, the algorithm would find a fixed point of $G$ after finitely many iterations. Therefore, for the sequence generated by the proposed algorithm, we assume that $F(\u_k) \neq 0$, $\forall k\geq0$.

\subsection{Global convergence for contractive mappings} In this section, we focus on the study of the proposed GlobAA($m$) for contractive mappings, where we will prove its global linear convergence by imposing some assumptions on $\gamma_k$.

\subsubsection{Global $r$-linear convergence for \texorpdfstring{$m \geq 1$}{m>=1}}\label{sec2.1}
We introduce our main assumptions on the mapping $G$ and emphasize that the analysis in this subsection does not rely on the optimality conditions satisfied by ${\alpha_j^k}$ in solving problem~\eqref{eq_coff}. Instead, it only requires the following assumptions on ${\alpha_j^k}$, which coincide with those in \cite{chen2019convergence, TothKelley2015}.
\begin{assumption}\label{anderson-ass1}
    The function $G:\mathbb{R}^n \to \mathbb{R}^n$ satisfies the following conditions:
    \begin{itemize}
        \item[(i)] $G$ is Lipschitz continuous with constant $c<1$;
        \item[(ii)] $G$ is differentiable at its fixed point $\u^*$.
    \end{itemize}
\end{assumption}
\begin{assumption}\label{anderson-ass2}
The coefficients $\{\alpha_j^k\}_{j=0}^{m_k}$ satisfy the following conditions for all $k\geq1$:
    \begin{itemize}
    \item[(i)] $\|\sum\nolimits_{j=0}^{m_k}\alpha_j^k{{F}}_{k-m_k+j}\| \leq \|{F}_k\|$;
    \item[(ii)] $\sum\nolimits_{j=0}^{m_k}\alpha_j^k=1$;
    \item[(iii)] there exists a constant $M_{\alpha}\geq 1$ such that $\sum_{j=0}^{m_k}|\alpha_j^k|\leq M_{\alpha}$ holds for all $k\geq1$.
\end{itemize}
\end{assumption}
\begin{remark}
    The first two conditions in Assumption~\ref{anderson-ass2} follow naturally from subproblem \eqref{eq_coff}, while condition (iii) is standard in AA convergence analysis \cite{chen2019convergence, TothKelley2015}. Our theoretical analysis is developed for the unregularized Anderson subproblem, with condition (iii) imposed as an analytical assumption. In practice, suitable regularization can promote condition (iii) and improve numerical stability by controlling the coefficients; we therefore consider it as an optional stabilization in the experiments.
\end{remark}

We next impose a condition on the parameters $\{\gamma_k\}$ in the update strategy $\mathcal{S}$, which plays a key role in ensuring the linear convergence and the eventual reduction of the proposed algorithm to the pure AA.
\begin{assumption}\label{ass3}
    The sequence $\{\gamma_k\}$ selected by the update strategy $\mathcal{S}$ satisfies the following conditions:
    \begin{itemize}
        \item[(i)] $\gamma_k\in(0,1)$ for all $k\geq 0$;
        \item[(ii)] $\gamma_k$ is non-decreasing and $\gamma_k \to 1$ as $k \to \infty$;
        \item[(iii)] $\sum_{k=0}^{\infty} (1-\gamma_k) = \infty$.
    \end{itemize}
\end{assumption}
\begin{remark}
 Conditions (i)-(iii) are mild and can be easily satisfied by many update strategies. For example, we can simply set $\gamma_k = 1 - \left(\frac{1}{k+\nu+1}\right)^p$ with $p\in(0,1]$ and $\nu \in \mathbb{N}_{+}$, which is a common choice for satisfying Assumption \ref{ass3}.
\end{remark}

\begin{theorem}
    \label{second}%
    Suppose that Assumptions~\ref{anderson-ass1}-(i) and \ref{ass3} hold, and let $\{\u_k\}$ be the sequence generated by GlobAA($m$). Then $\|F(\u_k)\| \rightarrow 0$ and $\{\u_k\}$ converges to $\u^*$ as $k\rightarrow \infty$. Moreover, if Assumptions~\ref{anderson-ass1}-(ii) and \ref{anderson-ass2} hold, there exists a $K_0\in \mathbb{N}$ such that the condition \eqref{global-con} is satisfied for every $k\geq K_0$, which implies that the iterates generated by GlobAA($m$) coincide with those generated by pure AA thereafter.
\end{theorem}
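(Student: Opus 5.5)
Both parts rest on two elementary facts about a contraction $G$ with constant $c<1$: $\|F(\u)\|\le(1+c)\|\u-\u^*\|$ and $(1-c)\|\u-\u^*\|\le\|F(\u)\|$. So convergence of residuals to zero is equivalent to $\u_k\to\u^*$, and the first claim reduces to showing $\|F(\u_k)\|\to0$.

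\textbf{Residual decay.} I will track the running maximum $R_k:=\max_{0\le j\le m_k}\|F_{k-m_k+j}\|$ over the current memory window, which is reset after each \eqref{K} step.

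On an \eqref{A} step, \eqref{global-con} gives $\|F_{k+1}\|\le\gamma_k R_k\le R_k$. On a \eqref{K} step, averagedness gives
\[
\|F((1-\lambda)\u_k+\lambda G_k)\|\le\|G(\u_{k+1})-G(\u_k)\|+(1-\lambda)\|F_k\|\le(1-\lambda+\lambda c)\|F_k\|,
\]
a fixed factor $\rho:=1-\lambda(1-c)<1$. Because $m_k$ is reset to $0$ after a \eqref{K} step, the window maximum cannot increase. Hence $\{R_k\}$ is nonincreasing, and therefore $\sup_k\|F_k\|\le\|F_0\|$.

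\textbf{Convergence to zero.} I split on whether $\mathbb{N}_{\ref{K}}$ is infinite.

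If $\mathbb{N}_{\ref{K}}$ is infinite, every \eqref{K} step multiplies the current residual by $\rho$, so a subsequence of $R_k$ decreases geometrically and $R_k\to0$.

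If $\mathbb{N}_{\ref{K}}$ is finite, then eventually only \eqref{A} steps occur. Group the iterations into blocks of length $m+1$. Each new residual is at most $\gamma_k$ times the previous window maximum. After $m+1$ consecutive \eqref{A} steps the whole window has been refreshed, so the window maximum has shrunk by a factor at least $\gamma_{k+m}$; here the monotonicity of $\gamma_k$ from Assumption~\ref{ass3}(ii) is used. Consequently
\[
R_{k+N(m+1)}\le\prod_{i}\gamma_{k+i(m+1)+m}\,R_k .
\]
By Assumption~\ref{ass3}(iii), $\sum(1-\gamma_k)=\infty$. Since $\gamma_k$ is monotone, the subsampled series $\sum_i(1-\gamma_{k+i(m+1)+m})$ also diverges. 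The product therefore tends to $0$, giving $R_k\to0$ and hence $\u_k\to\u^*$.

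\textbf{Eventual acceptance of \eqref{global-con} (main obstacle).} This is the hardest part. The target is a one-step bound near $\u^*$ of the form
\[
\|F(\u_{\rm AA}^{k+1})\|\le c\,\|F_k\|+o\Bigl(\max_j\|F_{k-m_k+j}\|\Bigr),
\]
which I expect to obtain as follows.

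1. Write $e_j=\u_j-\u^*$ and $G'=G'(\u^*)$. Differentiability at $\u^*$ (Assumption~\ref{anderson-ass1}(ii)) gives $G(\u_j)-\u^*=G'e_j+o(\|e_j\|)$.

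2. Use $\sum_j\alpha_j^k=1$ and $\sum_j|\alpha_j^k|\le M_\alpha$ (Assumption~\ref{anderson-ass2}) to get
\[
\u_{\rm AA}^{k+1}-\u^*=G'\sum_j\alpha_j^k e_{k-m_k+j}+o(\max_j\|e_{k-m_k+j}\|).
\]

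3. Note that $F(\u_{\rm AA}^{k+1})$ can also be linearized, and that $\sum_j\alpha_j^kF_{k-m_k+j}=(G'-I)\sum_j\alpha_j^ke_{k-m_k+j}+o(\cdot)$.

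4. Since $\|G'\|\le c$, we have $\|F(\u_{\rm AA}^{k+1})\|\le\|G'\|\,\|\sum_j\alpha_j^ke_{k-m_k+j}\|+o(\cdot)$. Also $\|G'\|\,\|(G'-I)^{-1}\|\le c/(1-c)$ is bounded.

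5. Combining these with Assumption~\ref{anderson-ass2}(i) gives
\[
\|F(\u_{\rm AA}^{k+1})\|\le c\,\|F_k\|+o\Bigl(\max_j\|F_{k-m_k+j}\|\Bigr).
\]

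The delicate point is using $G(\u_{\rm AA})-G(\u^*)$ with the linearization while keeping the constant equal to $c$ and not $c/(1-c)$. I would first try bounding $\|F(\u_{\rm AA})\|\le c\,\|\sum\alpha_jF_j\|+o(\cdot)$ directly via $G'(G'-I)^{-1}$ commuting, which yields the factor $c$ on $\|\sum\alpha_jF_j\|\le\|F_k\|$.

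Given this bound, $\gamma_k\to1>c$ and $\max_j\|F_{k-m_k+j}\|\to0$ imply that the right-hand side is at most $\gamma_k\max_j\|F_{k-m_k+j}\|$ for all $k\ge K_0$. So \eqref{global-con} holds for every $k\ge K_0$, and no \eqref{K} steps occur afterward. Since $\kappa$ is then fixed and $m_k$ eventually equals $m$, the iterates coincide with pure AA from that point on.
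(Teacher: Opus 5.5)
Your residual-decay argument is correct and is essentially Step~1 of the paper with the case split reversed. You split on whether there are infinitely many \textsf{K} steps and use the memory reset ($m_{k+1}=0$ after a \textsf{K} step, so $R_{k+1}=\|F_{k+1}\|\le\rho R_k$) to get geometric decay along them. The paper instead splits on the number of \textsf{A} steps and, in the infinite case, absorbs the \textsf{K} steps into the same block-product argument through $\eta_k=\max\{\gamma_k,\,c\lambda+1-\lambda\}$. Both work.

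For the acceptance part you take a different route, and Step~4 as written is wrong. The inequality $\|F(\u_{\rm AA}^{k+1})\|\le\|G'\|\,\|\sum_j\alpha_j^ke_{k-m_k+j}\|+o(\cdot)$ already fails for $G(\u)=-c\u$, where $F(\u_{\rm AA}^{k+1})=c(1+c)\sum_j\alpha_j^ke_{k-m_k+j}$ exactly. Estimating $\|\sum_j\alpha_j^ke_{k-m_k+j}\|$ through $\|(G'-I)^{-1}\|$ also cannot recover the constant $c$. What works is the commutation you mention at the end, and it gives an exact identity. Set $A:=G'(\u^*)$, $r(\u):=G(\u)-\u^*-A(\u-\u^*)$, $\bar e:=\sum_j\alpha_j^ke_{k-m_k+j}$ and $s_k:=\sum_j\alpha_j^kr(\u_{k-m_k+j})$. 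Then $\u_{\rm AA}^{k+1}-\u^*=A\bar e+s_k$ and $\sum_j\alpha_j^kF_{k-m_k+j}=(A-I)\bar e+s_k$, hence
\[
F(\u_{\rm AA}^{k+1})=(A-I)(A\bar e+s_k)+r(\u_{\rm AA}^{k+1})=A\sum\nolimits_j\alpha_j^kF_{k-m_k+j}-s_k+r(\u_{\rm AA}^{k+1}).
\]
Now use $\|A\|\le c$, $\|s_k\|\le M_\alpha\max_j\|r(\u_{k-m_k+j})\|=o(W_k)$ and $\|\u_{\rm AA}^{k+1}-\u^*\|=O(W_k)$. These give $\|F(\u_{\rm AA}^{k+1})\|\le c\|F_k\|+o(W_k)$, and your final paragraph then goes through.

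The paper never linearizes at $\u_{\rm AA}^{k+1}$. With $\bar{\u}:=\sum_j\alpha_j^k\u_{k-m_k+j}$ it writes $F(\u_{\rm AA}^{k+1})=[G(\u_{\rm AA}^{k+1})-G(\bar{\u})]+[G(\bar{\u})-\u_{\rm AA}^{k+1}]$. The first bracket is bounded exactly by $c\|\u_{\rm AA}^{k+1}-\bar{\u}\|=c\|\sum_j\alpha_j^kF_{k-m_k+j}\|\le c\|F_k\|$ via the global Lipschitz constant. Differentiability is used only for the second bracket, which equals $r(\bar{\u})-s_k=o(W_k)$. That route is shorter and needs no matrix algebra. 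Your identity is its linearized counterpart, and it buys a slightly sharper leading coefficient $\|G'(\u^*)\|\le c$.
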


\begin{proof}
  We divide the proof into the following four steps.
  
  \textbf{Step 1:} Prove $\|{F}_{k}\| \rightarrow 0$ as $k \rightarrow \infty$ by considering the following two cases.

  {{- Case 1}} ($|\mathbb{N}_{\ref{A}}| < \infty$): This means that there are only finitely many Anderson steps in the iterations and we can assume that the last Anderson step occurs at iteration $K_{\max}$ (if $\mathbb{N}_{\ref{A}}=\emptyset$, we can simply set $K_{\max}=0$). When $k\in\mathbb{N}_{\ref{K}}$, we have $\u_{k+1} = (1-\lambda)\u_k + \lambda G_k$, and then 
        \begin{equation}\label{eq2.3}
            \begin{aligned}
            \|F_{k+1}\| &= \|G_{k+1}-\u_{k+1}\| = \|G_{k+1}-G_k+ (1-\lambda)(G_k-\u_k)\|\\
            &\leq c \|\u_{k+1} - \u_k\| + (1-\lambda)\|F_k\| 
            = (c \lambda + (1-\lambda))\|F_k\|. \\
            \end{aligned}
        \end{equation}
        Since the above inequality holds for $k > K_{\max}$ and $c \lambda + (1-\lambda)<1$, it follows that $\|{F}_{k}\| \rightarrow 0$ as $k \rightarrow \infty$.
        
  {{- Case 2}} ($|\mathbb{N}_{\ref{A}}| = \infty$): To simplify the notation, we denote
        \begin{equation*}
            W_k := \max_{0\leq j \leq m_k}\|{F}_{k-m_k+j}\|.
        \end{equation*}
        If $k \in \mathbb{N}_{\ref{A}}$, then $\|{F}_{k+1}\|\leq \gamma_k W_k$. Otherwise, by \eqref{eq2.3}, we have $\|{F}_{k+1}\|\leq (c \lambda + (1-\lambda)) \|{F}_k\|\leq (c \lambda + (1-\lambda)) W_k$. Denote $\eta_k :=\max\{\gamma_k, c \lambda + (1-\lambda)\}<1$. Then $\eta_k$ is non-decreasing. Hence, we obtain
        \begin{equation}\label{eq2.4-1}
            \|{F}_{k+1}\| \leq \eta_k W_k, \quad \forall k \geq 0.
        \end{equation}
        By the definition of $W_{k+1}$, we have $W_{k+1}\leq \max\{W_k, \|{F}_{k+1}\|\}\leq W_k$. Therefore, $\{W_k\}$ is a non-increasing sequence. 

        Since $\{\eta_k\}$ is non-decreasing and $\{W_k\}$ is non-increasing, \eqref{eq2.4-1} gives $\|{F}_{k+i+1}\|\leq \eta_{k+i} W_{k+i}\leq \eta_{k+m} W_{k+i} \leq \eta_{k+m} W_k$ {for $0\leq i \leq m$}. Recalling $\lim_{k\rightarrow\infty}\gamma_k=1$, there exists a $K_1$ such that $\gamma_k \geq c \lambda + (1-\lambda)$ for all $k\geq K_1$, which implies $\eta_k = \gamma_k$ for $k \geq K_1$. Thus, we have $W_{k+m+1} \leq \gamma_{k+m} W_k$ for all $k \geq K_1$. Therefore, 
        \begin{equation}\label{eq_prod}
            W_{K_1+l(m+1)} \leq W_{K_1}\prod_{j=0}^{l-1} \gamma_{K_1+j(m+1)+m}.
        \end{equation}
        Due to Assumption~\ref{ass3}-(ii) and (iii), we have $\sum_{j=0}^{\infty}(1- \gamma_{K_1+j(m+1)+m}) = +\infty$, which implies that $\prod_{j=0}^{\infty} \gamma_{K_1+j(m+1)+m} = 0$. Hence, \eqref{eq_prod} and the monotonicity of $\{W_k\}$ imply that $W_k\rightarrow 0$, and therefore $\|{F}_k\| \rightarrow 0$ as $k \rightarrow \infty$.

  \textbf{Step 2:} Convergence of $\{\u_k\}$. Since $G$ is a contraction mapping on $\mathbb{R}^n$ with contraction factor $c\in(0,1)$, we have
        \begin{equation}\label{eq_relationship}
            (1-c)\|\u-\u^*\|\leq \|F(\u)\|\leq (1+c)\|\u-\u^*\|.
        \end{equation}
        By $\|\u_k-\u^*\|\leq \frac{1}{1-c}\|F_k\|$, it follows that $\|\u_k-\u^*\|\to 0$ by $\|F_k\|\to 0$ as $k\to\infty$.

  \textbf{Step 3:} Estimation of $\|F(\u_{\rm AA}^{k+1})\|$.
        First, we have
        \begin{equation}\label{eq2.5-1}
        \begin{aligned}
            &\|F(\u_{\rm AA}^{k+1})\| = \|G(\u_{\rm AA}^{k+1}) - \u_{\rm AA}^{k+1}\| \\
            & \leq \left\|G(\u_{\rm AA}^{k+1})-G\left(\sum\nolimits_{j=0}^{m_k}\alpha_j^k \u_{k-m_k+j}\right)\right\|+\left\|G\left(\sum\nolimits_{j=0}^{m_k}\alpha_j^k \u_{k-m_k+j}\right) -  \u_{\rm AA}^{k+1}\right\| \\
            &\leq c \left\|\u_{\rm AA}^{k+1} - \sum\nolimits_{j=0}^{m_k} \alpha_j^k \u_{k-m_k+j} \right\|+\left\|G\left(\sum\nolimits_{j=0}^{m_k}\alpha_j^k \u_{k-m_k+j}\right) -  \u_{\rm AA}^{k+1}\right\| \\
            &\leq c \left\|\sum\nolimits_{j=0}^{m_k}\alpha_j^k\left(G(\u_{k-m_k+j})-\u_{k-m_k+j}\right) \right\| + \left\|G\left(\sum\nolimits_{j=0}^{m_k}\alpha_j^k \u_{k-m_k+j}\right) -  \u_{\rm AA}^{k+1}\right\| \\  
            &\leq c\|F_k\|+ \left\|G\left(\sum\nolimits_{j=0}^{m_k}\alpha_j^k \u_{k-m_k+j}\right) -  \u_{\rm AA}^{k+1}\right\|,
        \end{aligned}
        \end{equation}        
        where the last inequality follows from Assumption \ref{anderson-ass2}-(i).
        
        Next, we continue to estimate the second term on the right-hand side of the above inequality.
        Due to \eqref{eq_relationship}, we have 
        \begin{equation}\label{eq2.6-1}
        \begin{aligned}
            &\left \|\sum\nolimits_{j=0}^{m_k}\alpha_j^k \u_{k-m_k+j} - \u^*\right \| = \left\|\sum\nolimits_{j=0}^{m_k}\alpha_j^k\left(\u_{k-m_k+j} - \u^* \right) \right\| \\ &
            \leq \sum\nolimits_{j=0}^{m_k}|\alpha_j^k|\max_{0\leq j\leq m_k}\left\|\u_{k-m_k+j} - \u^*  \right\|\leq \frac{M_{\alpha}}{1-c}\max_{0\leq j\leq m_k}\|F_{k-m_k+j}\|,
        \end{aligned}
        \end{equation}
        by step 1, which shows that $\sum\nolimits_{j=0}^{m_k}\alpha_j^k \u_{k-m_k+j} \rightarrow \u^*$ as $k \rightarrow \infty$. %The above 
        Since $G$ is differentiable at $\u^*$, we have $G(\u) = G(\u^*) + G'(\u^*)(\u-\u^*) + o(\|\u-\u^*\|)$ as $\u\rightarrow\u^*$. Then, by $\u_k\rightarrow\u^*$ as $k\rightarrow\infty$ proved in step 2, there exists $K_2\in \mathbb{N}$ such that for all $k\geq K_2$, we have
        \begin{equation}\label{eq_AA_estimate}
        \begin{aligned}
            &\left\|G\left(\sum\nolimits_{j=0}^{m_k}\alpha_j^k \u_{k-m_k+j}\right) -  \u_{\rm AA}^{k+1}\right\| \\=&\left\|G\left(\sum\nolimits_{j=0}^{m_k}\alpha_j^k \u_{k-m_k+j}\right) - G(\u^*) + G(\u^*) - \u_{\rm AA}^{k+1}\right\| \\
            =& \left\|G'(\u^*)\left(\sum\nolimits_{j=0}^{m_k}\alpha_j^k \u_{k-m_k+j} - \u^*\right)+ G(\u^*) - \u_{\rm AA}^{k+1}\right\|+o\left(\max_{0\leq j\leq m_k}\|F_{k-m_k+j}\|\right) \\
            =& o\left(\max_{0\leq j\leq m_k}\|F_{k-m_k+j}\|\right), 
        \end{aligned}
        \end{equation}
        where the second equality follows from the differentiability of $G$ at $\u^*$ and \eqref{eq2.6-1}, and the third equality holds by using \eqref{eq_AA} and \eqref{eq_relationship} together with the differentiability of $G$ at $\u^*$ to obtain
        \begin{equation*}
        \begin{aligned}
    &\left\|G'(\u^*)\left(\sum_{j=0}^{m_k}\alpha_j^k \u_{k-m_k+j} - \u^*\right)+ G(\u^*) - \u_{\rm AA}^{k+1}\right\|\\
                    =& \left\|\sum_{j=0}^{m_k} \alpha_j^k \left[G'(\u^*)(\u_{k-m_k+j} - \u^*) + G(\u^*) - G(\u_{k-m_k+j})\right]\right\|\\
            \leq& M_{\alpha}\max_{0\leq j\leq m_k}o(\|\u_{k-m_k+j} - \u^*\|)=o\left(\max_{0\leq j\leq m_k}\|F_{k-m_k+j}\|\right).
        \end{aligned}
        \end{equation*}
        Combining \eqref{eq2.5-1} and \eqref{eq_AA_estimate}, and using the definition of $W_k$, we obtain, for all $k\geq K_2$, that
        \begin{equation}\label{F_AA}
            \|F(\u_{\rm AA}^{k+1})\| \leq c\|F_k\| + {o}(W_k).
        \end{equation}
        
        \textbf{Step 4:} Verification of condition \eqref{global-con}. 
        Let $\delta>0$ be given satisfying $c+\delta<1$.
        Due to Assumption~\ref{ass3}-(ii), there exists a $K_3\geq K_2$ such that $\gamma_k \geq c+\delta$ for all $k\geq K_3$. 
        By \eqref{F_AA}, there exists a $K_4\geq K_3$ such that for all $k\geq K_4$, we have 
        \begin{equation*}
            \|F(\u_{\rm AA}^{k+1})\| \leq c\|F_k\| + \delta W_k \leq (c+\delta) W_k \leq \gamma_k W_k.
        \end{equation*}
        Therefore, we can set $K_0 = K_4$, and for all $k\geq K_0$, condition \eqref{global-con} is satisfied, which means that every iteration for $k\geq K_0$ is an Anderson step. In other words, GlobAA($m$) reduces to the pure AA after finitely many iterations.
\end{proof}

\begin{remark}
  Theorem~\ref{second} remains valid if conditions (ii) and (iii) in Assumption~\ref{ass3} are replaced by the following weaker conditions: 
    \begin{itemize}
        \item [(ii’)] $\gamma_k \to 1$ as $k \to \infty$;
         \item [(iii’)] there exists a $k_0\in \mathbb{N}$ such that $\sum_{j=0}^{\infty} \left(1-\max_{0\leq i \leq m}\gamma_{k_0+j(m+1)+i}\right) = +\infty$.
    \end{itemize}
      In this case, we can define $\hat{\eta}_k$ in Step 1 as $\hat{\eta}_k:=\max_{0\leq i\leq m}\eta_{k+i}$, where ${\eta_k}$ is defined as in Theorem~\ref{second}. Then, \eqref{eq2.4-1} together with the monotonicity of ${W_k}$ yields $W_{k+m+1}\leq \hat{\eta}_k W_k$.
    Although these conditions are weaker, they are more technical. We therefore retain the simpler formulation stated in Assumption~\ref{ass3}.
\end{remark}
\begin{remark}
These results also help position GlobAA($m$) relative to several existing stabilization and globalization strategies for AA. Most existing approaches enhance the robustness of AA through different mechanisms, including controlling the Anderson coefficients \cite{chen2019convergence} and incorporating regularization or safeguarding strategies \cite{fu2020anderson, ouyang2023nonmonotone}. 
In particular, among the results in \cite{ouyang2023nonmonotone}, the analysis for contractive mappings is developed with prior knowledge of the contraction factor and employs regularization throughout the iteration.
GlobAA($m$), on the other hand, takes a complementary approach: it preserves the original Anderson subproblem without modifying or regularizing the Anderson coefficients, does not require prior knowledge of the contraction factor, and, as established in Theorem~\ref{second}, eventually reduces exactly to pure AA after finitely many iterations. Thus, GlobAA($m$) provides globalization while retaining the original AA mechanism once the iterates enter a regime in which pure AA can be safely employed. 
The numerical experiments in subsections~\ref{sec3.1} and \ref{sec3.2} further illustrate the practical implications of these differences. In the contractive examples considered here, the residual-based safeguarding strategy in \cite{fu2020anderson} may still accept Anderson steps that provide insufficient progress toward convergence. By contrast, GlobAA($m$) rejects such steps when necessary and exhibits robust convergence across different initial points and memory sizes.
\end{remark}
\begin{theorem}\label{third}
    Suppose that Assumptions~\ref{anderson-ass1}, \ref{anderson-ass2} and \ref{ass3} hold, and let $\{\u_k\}$ be the sequence generated by GlobAA($m$). Then 
    \begin{equation*}
    % \label{eq_r_factor}
        \limsup_{k \to \infty} \left(\frac{\|F(\u_{k})\|}{\|F(\u_0)\|}\right)^{{1}/{k}}\leq c, \quad \limsup_{k \to \infty} \left(\frac{\|\u_{k} - \u^*\|}{\|\u_{0}-\u^*\|}\right)^{{1}/{k}} \leq c.
    \end{equation*}
\end{theorem}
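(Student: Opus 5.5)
Our starting point is Theorem~\ref{second}: there is a $K_0$ such that every iteration with $k\ge K_0$ is an Anderson step, so from $K_0$ on GlobAA($m$) runs pure AA. Moreover, $\kappa$ no longer changes after $K_0$, so $m_k=m$ for all $k\ge K_0+m$. Fix $\delta>0$ with $c+\delta<1$. From Step 3 of the proof of Theorem~\ref{second}, namely \eqref{F_AA}, there is $K_5\ge K_0+m$ such that for all $k\ge K_5$
\begin{equation*}
\|F_{k+1}\| = \|F(\u_{\rm AA}^{k+1})\|\le c\|F_k\| + \delta W_k \le (c+\delta)W_k ,
\end{equation*}
where $W_k=\max_{0\le j\le m}\|F_{k-m+j}\|$. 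The whole argument rests on this one-step inequality; the rest is bookkeeping to turn it into an $r$-linear rate and then send $\delta\to0$.

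\textbf{Residual estimate.} We will show that the window maximum contracts over one full window. Since $W_{k+1}\le\max\{W_k,\|F_{k+1}\|\}\le W_k$, the sequence $\{W_k\}$ is nonincreasing for $k\ge K_5$. Then, for $0\le i\le m$,
\begin{equation*}
\|F_{k+i+1}\|\le (c+\delta)W_{k+i}\le (c+\delta)W_k ,
\end{equation*}
which gives $W_{k+m+1}\le (c+\delta)W_k$. Iterating yields
\begin{equation*}
\|F_k\|\le W_k\le C_\delta\,(c+\delta)^{(k-K_5)/(m+1)} .
\end{equation*}
This only gives a factor $(c+\delta)^{1/(m+1)}$, which is too weak, so the estimate must be sharpened.

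\textbf{Sharpening (expected main obstacle).} To remove the exponent $1/(m+1)$, we exploit the fact that the $\delta W_k$ term is $o(W_k)$ and is small compared with the $c\|F_k\|$ term only when $\|F_k\|$ is not much smaller than $W_k$. The planned device is a weighted maximum: set $V_k=\max_{0\le j\le m}\rho^{-j}\|F_{k-j}\|$ with $\rho=c+\epsilon$, and aim for $V_{k+1}\le\rho V_k$ for large $k$. The required bound is
\begin{equation*}
\|F_{k+1}\|\le c\|F_k\|+\delta W_k\le c V_k+\delta\rho^{-m}V_k ,
\end{equation*}
using $\|F_k\|\le V_k$ and $W_k\le\rho^{-m}V_k$ (because each $\|F_{k-j}\|\le \rho^{j}V_k\le\rho^{-m}V_k$ when $\rho<1$). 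Since $\delta$ may be taken arbitrarily small relative to $\epsilon$ (with $\rho$ fixed), choose $\delta\rho^{-m}\le\epsilon$. Then $\|F_{k+1}\|\le\rho V_k$. The shifted entries satisfy $\rho^{-j}\|F_{k+1-j}\|=\rho\cdot\rho^{-(j-1)}\|F_{k-(j-1)}\|\le\rho V_k$ for $j=1,\dots,m$. Together these give $V_{k+1}\le\rho V_k$. Hence
\begin{equation*}
\|F_k\|\le V_k\le \rho^{k-K}V_K ,
\end{equation*}
so $\limsup_k(\|F_k\|/\|F_0\|)^{1/k}\le c+\epsilon$. Letting $\epsilon\to0$ gives the first claim; $\|F_0\|>0$ is a fixed constant, so normalizing by it does not affect the $k$-th root limit.

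\textbf{Iterates.} By \eqref{eq_relationship}, $\|\u_k-\u^*\|\le \|F_k\|/(1-c)$, and $\|\u_0-\u^*\|>0$ since $F(\u_0)\neq0$. Taking $k$-th roots, the constants disappear in the limit, so the second $\limsup$ is also at most $c$.
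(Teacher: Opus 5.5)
\textbf{The sharpening step fails as written.} The weights in your $V_k$ have the wrong sign in the exponent. With $V_k=\max_{0\le j\le m}\rho^{-j}\|F_{k-j}\|$ and $\rho<1$, we have $\rho^{-j}=\rho^{-1}\rho^{-(j-1)}$, not $\rho\cdot\rho^{-(j-1)}$. So the shifted entries of $V_{k+1}$ only satisfy
\[
\rho^{-j}\|F_{k+1-j}\| = \rho^{-1}\cdot \rho^{-(j-1)}\|F_{k-(j-1)}\| \le \rho^{-1} V_k, \qquad j=1,\dots,m .
\]
This is not merely a lossy estimate. If the maximum defining $V_k$ is attained at some index $j\le m-1$, then the entry of index $j+1$ in $V_{k+1}$ equals $\rho^{-1}V_k>V_k$, so $V_{k+1}\le\rho V_k$ is false. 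Nothing in the one-step inequality $\|F_{k+1}\|\le c\|F_k\|+\delta W_k$ rules this out, since the residuals need not be monotone. For example, with $m=1$, take a tiny $\|F_{k-1}\|$ followed by $\|F_k\|$ comparable to $\delta\|F_{k-2}\|$. A symptom of the slip is that with weights $\rho^{-j}\ge 1$ you trivially have $W_k\le V_k$. The factor $\rho^{-m}$ in your bound $W_k\le\rho^{-m}V_k$, and hence the condition $\delta\rho^{-m}\le\epsilon$, would then be pointless.

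\textbf{The repair is to use decaying weights}, $V_k:=\max_{0\le j\le m}\rho^{j}\|F_{k-j}\|$. Then:
\begin{itemize}
\item $\|F_k\|\le V_k$ and $\|F_{k-j}\|\le\rho^{-j}V_k$, so $W_k\le\rho^{-m}V_k$; this is where $\rho^{-m}$ genuinely enters.
\item With $\delta\le\epsilon\rho^{m}$ you get $\|F_{k+1}\|\le(c+\delta\rho^{-m})V_k\le\rho V_k$.
\item For $j\ge1$, $\rho^{j}\|F_{k+1-j}\|=\rho\cdot\rho^{j-1}\|F_{k-(j-1)}\|\le\rho V_k$.
\end{itemize}
Hence $V_{k+1}\le\rho V_k$. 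The rest of your argument then goes through, provided the quantifiers are ordered as $\epsilon$ first, then $\delta=\epsilon\rho^m$, then $K_5$. Your observations that $m_k=m$ eventually, and that constants disappear under $k$-th roots in the iterate bound, are also fine.

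Once corrected, your proof is the paper's argument in Lyapunov form. The paper takes $q=\epsilon(c+\epsilon)^m$ in place of your $\delta$ and proves by induction the envelope $\|F_i\|\le\Gamma(c+\epsilon)^i$. Its induction hypothesis on the window $i=k-m,\dots,k$ is exactly the statement $V_k\le\Gamma\rho^k$ for the corrected $V_k$.
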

\begin{proof} 
    Let $\epsilon>0$ be given satisfying $c+\epsilon<1$. By Theorem~\ref{second}, $\u_{k+1}=\u_{\rm AA}^{k+1}$ for all $k\geq K_0$. 
    Set $q = {\epsilon}(c+\epsilon)^m$. By \eqref{F_AA}, there exists a $\bar{K} \geq K_0$ with $K_0$ given in Theorem \ref{second}, such that for all $k\geq \bar{K}$, we have
    \begin{equation}\label{eq_re}
        \|F_{k+1}\| \leq c \|F_k\| + q W_k. 
    \end{equation}
    Define $\Gamma:=(c+\epsilon)^{-\bar{K}}\max_{0\leq j \leq m_{\bar{K}}}\|{F}_{\bar{K}-m_{\bar{K}}+j}\|$. For $\bar{K}-m_{\bar{K}}\leq l \leq \bar{K}$, we have 
    \begin{equation}\label{eq_22}
    \begin{aligned}
        \|{F}_{l}\|&\leq\max_{0\leq j \leq m_{\bar{K}}}\|{F}_{\bar{K}-m_{\bar{K}}+j}\| =\left(\max_{0\leq j \leq m_{\bar{K}}}\|{F}_{\bar{K}-m_{\bar{K}}+j}\|\right)\cdot(c+\epsilon)^{-l}\cdot(c+\epsilon)^{l} \\
        &\leq \left(\max_{0\leq j \leq m_{\bar{K}}}\|{F}_{\bar{K}-m_{\bar{K}}+j}\|\right)\cdot(c+\epsilon)^{-\bar{K}}\cdot(c+\epsilon)^l 
        = \Gamma(c+\epsilon)^{l}.
    \end{aligned}
    \end{equation}
    We next claim that the estimate $\|{F}_{k}\|\leq \Gamma (c+\epsilon)^k$ holds for all $k\geq \bar{K}$. The base case $k = \bar{K}$ follows directly from \eqref{eq_22}. 
    We prove this claim by induction. Suppose that $\|{F}_{i}\|\leq \Gamma (c+\epsilon)^i$ holds for all $i = \bar{K}, \bar{K}+1, \dots, k$. Together with \eqref{eq_22}, this estimate also holds for $i = \bar{K}-m_{\bar{K}}, \dots, \bar{K}$. Moreover, since every iteration $i\geq \bar{K}$ is an Anderson step, we have $k-m_k \geq \bar{K}-m_{\bar{K}}$.
    By \eqref{eq_re}, we have
    \begin{equation*}
    \begin{aligned}
        \|F_{k+1}\| &\leq c \|F_k\| + q W_k \leq c \cdot \Gamma (c+\epsilon)^k + q \cdot \Gamma (c+\epsilon)^{k-m} \\
        &= \Gamma (c+\epsilon)^{k} \left(c + q(c+\epsilon)^{-m}\right) = \Gamma (c+\epsilon)^{k+1},
    \end{aligned}
    \end{equation*}
    where the second inequality follows from 
    \begin{equation*}
        W_k = \max_{0\leq j \leq m_{k}}\|{F}_{k-m_{k}+j}\|\leq \Gamma \max_{0\leq j \leq m_{k}}(c+\epsilon)^{{k- m_{k}}+j} \leq \Gamma (c+\epsilon)^{k-m}.
    \end{equation*}
    Therefore, we have $\|{F}_{k}\|\leq \Gamma (c+\epsilon)^k$ for all $k\geq \bar{K}$, which implies that 
    \begin{equation*}
        \limsup_{k \to \infty} \left(\frac{\|F(\u_{k})\|}{\|F(\u_0)\|}\right)^{\frac{1}{k}} \leq c+\epsilon.
    \end{equation*}
    Since $\epsilon>0$ is arbitrary, we have 
    \begin{equation*}
        \limsup_{k \to \infty} \left(\frac{\|F(\u_{k})\|}{\|F(\u_0)\|}\right)^{\frac{1}{k}} \leq c.
    \end{equation*}
    From \eqref{eq_relationship} and $\lim_{k\rightarrow\infty}\left(\frac{1+c}{1-c}\right)^{1/k}=1$, we can also obtain the $r$-linear convergence of $\|\u_k-\u^*\|$ with factor $c$, i.e.,
    \[
    \limsup_{k \to \infty} \left(\frac{\|\u_{k} - \u^*\|}{\|\u_{0}-\u^*\|}\right)^{{1}/{k}} \leq c.
    \]
\end{proof}

\subsubsection{Global residual $q$-linear convergence for \texorpdfstring{$m = 1$}{m=1}}\label{sec2.2}
In this subsection, we focus on the residual convergence analysis of GlobAA(1). For $m_k=1$, when $F_k \neq F_{k-1}$, the optimization problem in \eqref{eq_coff} has a closed form solution given by
\begin{equation}\label{eq_alpha}
    \alpha^k = \frac{ F_k^\top (F_k - F_{k-1})}{\|F_k - F_{k-1}\|^2}
\end{equation}
and then \eqref{eq_AA} is reduced to $\u_{\rm AA}^{k+1} = (1-\alpha^k)G_k + \alpha^k G_{k-1}$. As for the case when $F_k = F_{k-1}$, we can simply set $\alpha^k = 0$, which means that $\u_{\rm AA}^{k+1} = G_k$.

\begin{theorem}
    \label{fourth}%
    Suppose that Assumptions~\ref{anderson-ass1}, \ref{anderson-ass2} and \ref{ass3} hold, and let $\{\u_k\}$ be the sequence generated by GlobAA(1). Then, we have 
    \begin{equation*}
        \limsup_{k \to \infty} \frac{\|F(\u_{k+1})\|}{\|F(\u_k)\|}\leq c.
    \end{equation*}
\end{theorem}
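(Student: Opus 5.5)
The plan is to sharpen the one-step estimate \eqref{F_AA} from Theorem~\ref{second} so that, for $m=1$, the error term is $o(\|F_k\|)$ instead of $o(W_k)$. By Theorem~\ref{second} there is $K_0$ such that every iteration $k\ge K_0$ is an Anderson step. Hence $\kappa$ is frozen and $m_k=1$ for all $k\ge K_0+1$, so that $\u_{k+1}=(1-\alpha^k)G_k+\alpha^kG_{k-1}$ with $\alpha^k$ given by \eqref{eq_alpha}, or $\alpha^k=0$ if $F_k=F_{k-1}$. Also $\u_k\to\u^*$ and $\|F_k\|\to0$. I will then show $\|F_{k+1}\|\le c\|F_k\|+o(\|F_k\|)$, and dividing by $\|F_k\|>0$ gives the claim.

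\textbf{Step 1: a weighted bound on the coefficients.} The key observation is that $|\alpha^k|\,\|F_{k-1}\|\le C\|F_k\|$ for a constant $C$ independent of $k$, for instance $C=\max\{2,2M_\alpha\}$. I split into two cases.
\begin{itemize}
\item If $\|F_{k-1}\|\ge 2\|F_k\|$, then Cauchy--Schwarz in \eqref{eq_alpha} gives $|\alpha^k|\le \|F_k\|/\|F_k-F_{k-1}\|\le \|F_k\|/(\|F_{k-1}\|-\|F_k\|)$. Hence $|\alpha^k|\,\|F_{k-1}\|\le 2\|F_k\|$.
\item If $\|F_{k-1}\|<2\|F_k\|$, then $|\alpha^k|\le M_\alpha$ by Assumption~\ref{anderson-ass2}-(iii), and the bound follows directly.
\end{itemize}
The case $\alpha^k=0$ is trivial. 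The same bound holds for $|1-\alpha^k|\,\|F_k\|\le (1+M_\alpha)\|F_k\|$. Using \eqref{eq_relationship}, both weighted iterate errors $|\alpha^k|\,\|\u_{k-1}-\u^*\|$ and $|1-\alpha^k|\,\|\u_k-\u^*\|$ are $O(\|F_k\|)$.

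\textbf{Step 2: redo Step 3 of Theorem~\ref{second} with these weights.} Let $\mathbf z_k:=(1-\alpha^k)\u_k+\alpha^k\u_{k-1}$. The chain \eqref{eq2.5-1} still gives
\[
\|F_{k+1}\|\le c\|F_k\|+\|G(\mathbf z_k)-\u_{k+1}\|,
\]
using Assumption~\ref{anderson-ass2}-(i) for the first term. By Step 1, $\|\mathbf z_k-\u^*\|\le C'\|F_k\|\to0$. Differentiability of $G$ at $\u^*$ then gives $G(\mathbf z_k)=G(\u^*)+G'(\u^*)(\mathbf z_k-\u^*)+o(\|F_k\|)$.

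Expanding $\u_{k+1}$ as in the paper, the remaining term is
\[
(1-\alpha^k)R(\u_k)+\alpha^kR(\u_{k-1}), \qquad R(\u):=G(\u)-G(\u^*)-G'(\u^*)(\u-\u^*).
\]
Write $R(\u)=\rho(\u)\|\u-\u^*\|$ with $\rho(\u)\to0$ as $\u\to\u^*$. Then $\|\alpha^kR(\u_{k-1})\|\le \rho(\u_{k-1})\,|\alpha^k|\,\|\u_{k-1}-\u^*\|=o(\|F_k\|)$ by Step 1, and likewise for the $\u_k$ term. Combining these gives $\|F_{k+1}\|\le c\|F_k\|+o(\|F_k\|)$, and hence $\limsup\|F_{k+1}\|/\|F_k\|\le c$.

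The main obstacle is Step 1. The generic estimate \eqref{F_AA} only controls the error by $W_k=\max\{\|F_k\|,\|F_{k-1}\|\}$, and $\|F_{k-1}\|$ may be much larger than $\|F_k\|$. This is why the argument must use the explicit formula \eqref{eq_alpha}, rather than just the boundedness in Assumption~\ref{anderson-ass2}. It shows that a large previous residual forces a proportionally small coefficient.
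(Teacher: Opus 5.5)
Your proof is correct, and it takes a different route from the paper.

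\textbf{The paper's route.} The paper does not control the coefficients at every index. It fixes $\epsilon$ with $c+2\epsilon<1$ and uses the $r$-linear bound $\|F_k\|\le\Gamma(c+\epsilon)^k$ from Theorem~\ref{third} to show, by contradiction, that infinitely many indices satisfy $\|F_k\|\le(c+2\epsilon)\|F_{k-1}\|$. At such ``good'' indices one has $\|F_k-F_{k-1}\|\ge(1-c-2\epsilon)\|F_{k-1}\|$. Formula \eqref{eq_alpha} alone then bounds $|\alpha^k|\,\|F_{k-1}\|$ and $|1-\alpha^k|\,\|F_k\|$ by $\|F_k\|/(1-c-2\epsilon)$. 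This yields $\|F_{k+1}\|\le c\|F_k\|+o(\|F_k\|)$ at good indices. An induction then shows that one sufficiently late good index makes every later index good.

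\textbf{Your route.} Your case split at the fixed threshold $\|F_{k-1}\|\ge 2\|F_k\|$ replaces that mechanism. Where the previous residual dominates, you use the same Cauchy--Schwarz bound on \eqref{eq_alpha}. In the complementary case the residuals are comparable, so $|\alpha^k|\le M_\alpha$ from Assumption~\ref{anderson-ass2}-(iii) suffices. This gives the weighted bound, and hence $\|F_{k+1}\|\le c\|F_k\|+o(\|F_k\|)$, uniformly for all large $k$. You need neither Theorem~\ref{third}, the contradiction argument, nor the induction. Only Theorem~\ref{second} is used (eventual pure AA, hence $m_k=1$, and $\u_k\to\u^*$).

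\textbf{What each buys.} At good indices, the paper controls the AA(1) coefficients purely by the optimality formula: $|\alpha^k|\le(c+2\epsilon)/(1-c-2\epsilon)$ and $|1-\alpha^k|\le 1/(1-c-2\epsilon)$, independent of $M_\alpha$. It thus exhibits a self-sustaining regime in which the coefficients are automatically bounded. This does not save any hypotheses, however, since Assumption~\ref{anderson-ass2}-(iii) is already needed for Theorem~\ref{second}. Your argument is the shorter and more direct of the two.
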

\begin{proof}
    Let $\epsilon>0$ be given satisfying $c+2\epsilon<1$. By Theorem~\ref{second} and Theorem~\ref{third}, 
    there exist ${K_1}\in \mathbb{N}$ and $\Gamma>0$ such that every iteration $k\geq{K_1}$ is an Anderson step, $m_k=1$, and 
    \begin{equation}\label{eq_q_1}
        \|F_k\|\leq \Gamma (c+\epsilon)^k.
    \end{equation}
    First, we claim that there exist infinitely many $k \geq {K_1}$ such that $\|F_{k}\| \leq (c+2\epsilon)\|F_{k-1}\|$. Suppose, for contradiction, that the statement is not true. Then, there exists a $\tilde{K}\geq{K_1}$ such that for all $k \geq \tilde{K}$, we have $\|F_{k}\| > (c+2\epsilon)\|F_{k-1}\|$. Therefore, we have $\|F_{k}\| > (c+2\epsilon)^{k-\tilde{K}+1}\|F_{\tilde{K}-1}\|$ for all $k \geq \tilde{K}$. 
    Together with \eqref{eq_q_1}, we have $(c+2\epsilon)^{k-\tilde{K}+1}\|F_{\tilde{K}-1}\| < \|F_k\| \leq \Gamma (c+\epsilon)^k$, which implies that 
    \begin{equation*}
        \left(\frac{c+2\epsilon}{c+\epsilon}\right)^k < \frac{\Gamma (c+2\epsilon)^{\tilde{K}-1}} {\|F_{\tilde{K}-1}\|}.
    \end{equation*}
    Since $\frac{c+2\epsilon}{c+\epsilon}>1$ and the right-hand side is a constant, the above inequality cannot hold for sufficiently large $k$. Thus, there exist infinitely many $k \geq {K_1}$ such that 
    \begin{equation}\label{eq_q_2}
            \|F_{k}\| \leq (c+2\epsilon)\|F_{k-1}\|.
    \end{equation}
    Second, let $k\geq K_1$ be such that \eqref{eq_q_2} holds. Then we have $\|F_k\|\leq(c+2\epsilon)\|F_{k-1}\|$, which implies that $\|F_k-F_{k-1}\|\geq \|F_{k-1}\|-\|F_k\|\geq (1-c-2\epsilon)\|F_{k-1}\|$. Together with \eqref{eq_relationship} and \eqref{eq_alpha}, we have
    \begin{equation}\label{eq2.20}
        \begin{aligned}
            |\alpha^k|\|\u_{k-1}-\u^*\| \leq |\alpha^k|\frac{\|F_{k-1}\|}{1-c}\leq\frac{\|F_{k}\|}{(1-c-2\epsilon)(1-c)} 
        \end{aligned}
    \end{equation}
    and 
    \begin{equation}\label{eq2.21}
        \begin{aligned}
            |1-\alpha^k|\|\u_k-\u^*\| \leq |1-\alpha^k|\frac{\|F_k\|}{1-c} \leq \frac{\|F_k\|}{(1-c-2\epsilon)(1-c)}.
        \end{aligned}
    \end{equation}
  
    By \eqref{eq2.6-1} and step 2 of Theorem~\ref{second}, we have $(1-\alpha^k)\u_k + \alpha^k \u_{k-1}\rightarrow \u^*$ and $\u_k\rightarrow\u^*$ as $k \to \infty$. Similar to the analysis of \eqref{eq_AA_estimate}, and by \eqref{eq2.20}--\eqref{eq2.21}, there exists $K_2> K_1$ such that, for all $k\geq K_2$ satisfying \eqref{eq_q_2}, we have
    \begin{equation*}
    % \label{eq_AA1_estimate}
        \begin{aligned}
            &\left\|G\left((1-\alpha^k)\u_k + \alpha^k \u_{k-1}\right) -  \u_{\rm AA}^{k+1}\right\| \\
            = &\left\|G\left((1-\alpha^k)\u_k + \alpha^k \u_{k-1}\right) - G(\u^*) + G(\u^*) - ((1-\alpha^k)G_k + \alpha^k G_{k-1})\right\| \\
            \leq& o\left(|1-\alpha^k|\|\u_k-\u^*\|\right)+o\left(|\alpha^k|\|\u_{k-1}-\u^*\|\right)\\
            =&o\left(\|F_k\|\right).
        \end{aligned}
    \end{equation*}
    Then, by \eqref{eq2.5-1}, we have 
    \[
    \|F_{k+1}\|\leq c\|F_k\|+o\left(\|F_k\|\right)
    \]
    for all $k\geq {K_2}$ satisfying \eqref{eq_q_2}.
    Furthermore, there exists a $K_3\geq K_2$ such that, for every such $k\geq {K}_3$, we have $o\left(\|F_k\|\right) \leq 2\epsilon \|F_k\|$. 

    Choose an index $\hat{K}\geq K_3$ satisfying \eqref{eq_q_2}, whose existence follows from the above analysis. We prove that \eqref{eq_q_2} holds for all $k\geq \hat{K}$ by induction. The base case follows from the choice of $\hat{K}$. Suppose that \eqref{eq_q_2} holds for every $k = \hat{K}, \dots, K$ with $K\geq \hat{K}$. In particular, we have $\|F_K\|\leq(c+2\epsilon)\|F_{K-1}\|$. Then, we can estimate $\|F_{K+1}\|$ as follows:
    \begin{equation*}
        \|F_{K+1}\| \leq c \|F_K\| + o\left(\|F_K\|\right) \leq (c+2\epsilon) \|F_K\|.
    \end{equation*}
    Therefore, we have $\|F_{k+1}\| \leq (c+2\epsilon)\|F_k\|$ for all $k\geq \hat{K}$ by induction. By the arbitrariness of $\epsilon>0$, we have
    \[
    \limsup_{k \to \infty} \frac{\|F(\u_{k+1})\|}{\|F(\u_k)\|} \leq c.
    \]
\end{proof}

\subsection{Global residual convergence for nonexpansive mappings}\label{sec2.3}
This subsection is devoted to analyzing the global residual convergence of GlobAA($m$) for nonexpansive mappings, starting with several assumptions.
\begin{assumption}\label{anderson-global}
    The function $G:\mathbb{R}^n \to \mathbb{R}^n$ satisfies the following conditions:
    \begin{itemize}
        \item[(i)] $G$ is a nonexpansive mapping on $\mathbb{R}^n$, i.e.,
        \begin{equation*}
            \|G(\u)-G(\v)\|\leq \|\u-\v\|,\quad \forall \u, \v\in\mathbb{R}^n.
        \end{equation*}
        \item[(ii)] $G$ has a fixed point $\u^*$, i.e., $G(\u^*)=\u^*$.
    \end{itemize}
\end{assumption}

\begin{assumption}\label{anderson-global-gamma}
 Under the update strategy $\mathcal{S}$, the sequence $\{\gamma_k\}$ is generated according to the following conditions
     \begin{equation}\label{eq_new_theta_1}
        0<\gamma_k<1 \quad\mbox{and}\quad \gamma_k\leq1-\Theta\left({W_k}/{W_0}\right),
    \end{equation}
    where $W_k = \max_{0\leq j\leq m_k} \|F_{k-m_k+j}\|$, and $\Theta: [0,1]\rightarrow [0,1)$ is a non-decreasing function satisfying $\Theta(0)=0$ and $\Theta(t)>0$ for any $t>0$.
\end{assumption}

\begin{remark}
    It is easy to verify that $\gamma_k$ is well-defined since $W_k\leq W_0$ for $k\geq 0$.
    The conditions in Assumption~\ref{anderson-global-gamma} admit several simple choices of $\Theta$ and $\gamma_k$.
    For example, taking
$
\Theta(t)=\frac{t}{2(1+t)}$
and $\gamma_k=1-\frac{W_k}{2(W_0+W_k)}$
satisfies the required conditions. Alternatively, choosing
\[
\Theta(t)=
\begin{cases}
0, & t=0,\\
1-\xi, & t>0,
\end{cases}
\]
with $\xi\in(0,1)$, makes any sequence $\{\gamma_k\}$ with $\gamma_k\in(0,\xi]$ admissible. Moreover, another simple choice is $\Theta(t) = \frac{t}{2}$, for which one may take $\gamma_k = 1-\frac{W_k}{2W_0}$, and we prove that $0<\gamma_k<1$ in subsection~\ref{sec2.4}.

\end{remark}
\begin{theorem}
    \label{first}%
    Suppose that Assumptions~\ref{anderson-global}--\ref{anderson-global-gamma} hold and $\lambda\in (0,1)$. Then the sequence $\{\u_k\}$ generated by GlobAA($m$) satisfies $\|F(\u_k)\| \rightarrow 0$.
\end{theorem}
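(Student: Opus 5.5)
The plan is to argue through the monotone quantity $W_k=\max_{0\le j\le m_k}\|F_{k-m_k+j}\|$ from Assumption~\ref{anderson-global-gamma}, splitting according to whether $\mathbb{N}_{\ref{A}}$ is finite or infinite, much as in Step~1 of Theorem~\ref{second}.

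\textbf{Monotonicity.} For a Krasnosel'ski\oldbreve{\i}--Mann step, the computation in \eqref{eq2.3} with $c$ replaced by $1$ (nonexpansiveness) gives
\[
\|F_{k+1}\|\le \lambda\|F_k\|+(1-\lambda)\|F_k\|=\|F_k\|.
\]
For an Anderson step, \eqref{global-con} gives $\|F_{k+1}\|\le\gamma_kW_k<W_k$. In both cases $\|F_{k+1}\|\le W_k$, so $W_{k+1}\le\max\{W_k,\|F_{k+1}\|\}\le W_k$. This remains true after a reset of $\kappa$, since then $W_{k+1}=\|F_{k+1}\|$. Hence $W_k\downarrow W^*\ge0$, and it suffices to show $W^*=0$. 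Assume for contradiction that $W^*>0$. Since $W_k\ge W^*$ and $\Theta$ is non-decreasing, Assumption~\ref{anderson-global-gamma} yields the uniform bound $\gamma_k\le \bar\gamma:=1-\Theta(W^*/W_0)<1$ for all $k$.

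\textbf{Case $|\mathbb{N}_{\ref{A}}|<\infty$.} After the last Anderson step the method is a pure Krasnosel'ski\oldbreve{\i}--Mann iteration with fixed $\lambda\in(0,1)$ for a nonexpansive map having the fixed point $\u^*$. Here I would invoke the classical result (Ishikawa; Reich) that such iterations are asymptotically regular, $\|F_k\|\to0$. If a self-contained argument is preferred, it goes as follows. Fejér monotonicity $\|\u_{k+1}-\u^*\|\le\|\u_k-\u^*\|$ gives boundedness. The identity
\[
\|\u_{k+1}-\u^*\|^2\le\|\u_k-\u^*\|^2-\lambda(1-\lambda)\|F_k\|^2
\]
then shows $\sum_k\|F_k\|^2<\infty$, so $\|F_k\|\to0$. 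This contradicts $W^*>0$ (indeed it proves the claim directly). This is the only place where $\lambda<1$ is needed.

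\textbf{Case $|\mathbb{N}_{\ref{A}}|=\infty$.} This is the main step. Since $W_k\to W^*$ and $\bar\gamma<1$, choose an Anderson index $k$ large enough that $\bar\gamma W_k<W^*$, and set $\rho:=\bar\gamma W_k<W^*$. I then claim by induction that $\|F_j\|\le\rho$ for all $j\ge k+1$.
\begin{itemize}
\item The base case holds because $\|F_{k+1}\|\le\gamma_kW_k\le\rho$.
\item If $j\in\mathbb{N}_{\ref{A}}$, then $\|F_{j+1}\|\le\bar\gamma W_j\le\bar\gamma W_k=\rho$ by monotonicity of $W$.
\item If $j\in\mathbb{N}_{\ref{K}}$, then $\|F_{j+1}\|\le\|F_j\|\le\rho$.
\end{itemize}
For $j\ge k+1+m$, every index entering $W_j$ is at least $k+1$, since $m_j\le m$. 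Hence $W_j\le\rho<W^*$, contradicting $W_j\ge W^*$. Thus $W^*=0$, and so $\|F_k\|\le W_k\to0$.

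The delicate point is this uniform-in-$k$ induction. It requires the bound $\gamma_j\le\bar\gamma$ for all later $j$, which comes from Assumption~\ref{anderson-global-gamma} together with $W_j\ge W^*$. It also requires that a Krasnosel'ski\oldbreve{\i}--Mann step never increases the residual. The finite case rests on citing, or reproving via Fejér monotonicity, the classical asymptotic regularity of the Krasnosel'ski\oldbreve{\i}--Mann iteration.
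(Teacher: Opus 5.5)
Your proposal is correct and follows essentially the same route as the paper: the same split according to whether finitely or infinitely many Anderson steps occur, the same Fej\'er-type KM descent inequality in the finite case, and in the infinite case the same mechanism (monotonicity of $W_k$, the uniform bound $\gamma_k\le 1-\Theta(W^*/W_0)<1$ under the contradiction hypothesis, and the fact that KM steps never increase the residual, so the window maximum drops by that factor within $m+1$ iterations after an Anderson step). The only difference is cosmetic: you apply this $(m+1)$-step reduction once, at an Anderson index $k$ chosen late enough that $\bar\gamma W_k<W^*$, whereas the paper iterates it along a sparse subsequence of Anderson indices to obtain geometric decay of $W_{l_i}$.
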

\begin{proof}
    We prove this theorem by considering the following two cases.

        {\bf{Case 1:}} $|\mathbb{N}_{\ref{A}}| < \infty$, which means that there are only finitely many Anderson steps in the iterations. Thus, there exists a $K_{\rm KM}$ such that $k\in\mathbb{N}_{\ref{K}}$ for all $k \geq K_{\rm KM}$. Hence,
        we have $\u_{k+1} = (1-\lambda)\u_k + \lambda G_k$ for any $k \geq K_{\rm KM}$, and then 
        \begin{equation}\label{eq_km}
            \begin{aligned}
            \|\u_{k+1}- &\u^*\|^2 = \|(1-\lambda)(\u_k-\u^*) + \lambda(G_k-G(\u^*))\|^2 \\
            &=(1-\lambda)\|\u_k-\u^*\|^2 +\lambda\|G_k-G(\u^*)\|^2-\lambda(1-\lambda)\|G_k-\u_k\|^2 \\
            &\leq \|\u_k-\u^*\|^2 - \lambda(1-\lambda)\|F_k\|^2,
            \end{aligned}
        \end{equation}
        where the second equality follows from $\|(1-\lambda){\bf{a}}+\lambda {\bf{b}}\|^2=(1-\lambda)\|{\bf{a}}\|^2+\lambda \|{\bf{b}}\|^2-\lambda(1-\lambda)\|{\bf{a}}-{\bf{b}}\|^2$ and the last inequality holds because $G$ is nonexpansive. For any $N \geq K_{\rm KM}$, summing \eqref{eq_km} over $k = K_{\rm KM}, \dots, N$, we obtain
        \[
        \sum_{k=K_{\rm KM}}^{N} \lambda(1-\lambda)\|F_k\|^2 \leq \|\u_{K_{\rm KM}}-\u^*\|^2 - \|\u_{N+1}-u^*\|^2 \leq \|\u_{K_{\rm KM}}-\u^*\|^2.
        \]
        Letting $N\rightarrow\infty$ and using $\lambda\in(0,1)$, we obtain $\sum_{k=K_{\rm KM}}^{\infty}\|F_k\|^2 < \infty$, which implies that $\|F_k\|\rightarrow 0$ as $k\rightarrow \infty$.

        {\bf{Case 2:}} $|\mathbb{N}_{\ref{A}}| = \infty$, which means that there are infinitely many Anderson steps in the iterations. 
        Let us denote
        \[
        W_k = \max_{0\leq j\leq m_k} \|F_{k-m_k+j}\|.
        \]
        We first show that the sequence $\{W_k\}$ is non-increasing, {which implies that $W_k\leq W_0$.} To this end, we consider the following two cases:
        
          - If $k\in\mathbb{N}_{\ref{A}}$, by condition \eqref{global-con}, we have 
          \begin{equation}\label{eq1}
            \|F_{k+1}\| \leq \gamma_k W_k \leq W_k.
          \end{equation} 
          
          - If $k\in\mathbb{N}_{\ref{K}}$, then we have $\u_{k+1} = (1-\lambda)\u_k + \lambda G_k$, which implies that 
          \begin{equation}\label{KM-residual}
            \begin{aligned}
              \|F_{k+1}\| &= \|G_{k+1} - \u_{k+1}\|\leq\|G_{k+1}-G_k\|+\|(1-\lambda)(G_k-\u_k)\| \\
              &\leq\|\u_{k+1}-\u_k\|+(1-\lambda)\|F_k\| = \lambda\|G_k-\u_k\|+(1-\lambda)\|F_k\| \\
              &= \|F_k\| \leq W_k.
            \end{aligned}
          \end{equation}
        Combining \eqref{eq1} and \eqref{KM-residual}, we conclude that $\|F_{k+1}\|\leq W_k$ for all $k\geq 0$. Then, $W_{k+1}\leq \max\{\|F_{k+1}\|, W_k\}\leq W_k$. This shows that $\{W_k\}$ is non-increasing, and then $\lim_{k\to\infty}W_k = W_{\infty}$.

        Second, we will claim the statement that
        \begin{equation}\label{eq_claim}
            W_{k+m+1}\leq \zeta_k W_k, \quad \forall k\in \mathbb{N}_{\ref{A}},
        \end{equation}
        where $\zeta_k:=\max\{\gamma_j: j \in \mathbb{N}_{\ref{A}}, k\leq j\leq k+m \}$.
        
        To do this, we will show by induction that $\|F_{p+1}\|\leq \zeta_k W_k$ for all $p=k,...,k+m$. 
        It is easy to verify that the estimate holds for $p=k$ by \eqref{eq1} and $\gamma_k \leq \zeta_k$. 
        Suppose that $\|F_{p+1}\|\leq \zeta_k W_k$ holds for all $p = k, \dots, q-1$ with some $k+1 \leq q\leq k+m$, and we will prove $\|F_{q+1}\|\leq \zeta_k W_k$.
    
        If $q\in \mathbb{N}_{\ref{A}}$, we have $\gamma_{q} \leq \zeta_k$. Since $\{W_k\}$ is non-increasing and $q \geq k$, we have $W_q\leq W_k$. Therefore,
        \begin{equation*}
        % \label{eq2}
            \|F_{q+1}\|\leq \gamma_{q} W_{q}\leq \zeta_k W_k.
        \end{equation*}
        If $q \in \mathbb{N}_{\ref{K}}$,  
        we have 
        \[
        \|F_{q+1}\|\leq \|F_{q}\|\leq \zeta_k W_k,
        \]
        where the last inequality follows from the induction hypothesis. This closes the induction and thereby establishes the proof of statement \eqref{eq_claim}.

        Third, we will prove $W_{\infty} = 0$ by contradiction. Assume that $W_{\infty} > 0$. Since $W_k \geq W_{\infty}$ and $\Theta$ is non-decreasing, we have
        \begin{equation}\label{eq_theta_1}
        \Theta\left({W_k}/{W_0}\right)\geq \Theta\left({W_{\infty}}/{W_0}\right).
        \end{equation}
        Let $\delta := \Theta\left(W_{\infty}/{W_0}\right)\in(0,1)$. Then 
        \begin{equation}\label{eq_theta_2}
            \gamma_k\leq1-\Theta\left({W_k}/{W_0}\right)\leq1-\delta, \quad \forall k\geq0.
        \end{equation}
        Due to $|\mathbb{N}_{\ref{A}}| = \infty$, there exists a subsequence $\{l_i\}_{i=1}^{\infty} \subseteq \mathbb{N}_{\ref{A}}$ such that $l_{i+1} > l_i + m$. Since $\{W_k\}$ is non-increasing, by \eqref{eq_claim}, we have 
        \[
            W_{l_{i+1}}\leq W_{l_i+m+1}\leq \zeta_{l_i} W_{l_i}\leq (1-\delta)W_{l_i},
        \]
        which implies that $W_{l_{i}}\leq (1-\delta)^{i-1}W_{l_1}$. Therefore, $W_{l_i}\rightarrow0$ as $i\to\infty$, which contradicts $W_{\infty}>0$. In conclusion, we have $W_k\to 0$ and $\|F(\u_k)\|\to0$ as $k\to \infty$.
\end{proof}

\begin{remark}
    The preceding proof clarifies the role of Assumption~\ref{anderson-global-gamma}. If $|\mathbb{N}_{\ref{A}}|<\infty$, then after finitely many iterations the algorithm reduces to the KM iteration, and the residual convergence follows directly from the descent property of the KM steps. Hence, the essential role of Assumption~\ref{anderson-global-gamma} arises only when $|\mathbb{N}_{\ref{A}}|=\infty$. 
{In the contradiction argument, it suffices that for every $\epsilon\in(0,1)$, there exists a $\delta>0$ such that $\Theta(t)\geq\delta$ for $t\in[\epsilon,1]$. 
    Therefore, the conditions of $\Theta$ in Assumption~\ref{anderson-global-gamma} can be relaxed to  
    \begin{equation}\label{eq_new_theta_2}
        \inf_{t\in[\epsilon,1]}\Theta(t)>0, \quad\mbox{for any fixed $\epsilon\in(0,1)$}.
    \end{equation}}
\end{remark}
\subsection{A class-agnostic strategy for contractive and nonexpansive mappings}\label{sec2.4}
In this subsection, we construct a class-agnostic strategy $\mathcal{S}$, requiring no prior knowledge of the mapping class, that yields global $r$-linear convergence in the contractive case and global residual convergence in the nonexpansive case.
To this end, the sequence $\{\gamma_k\}$ generated by the update strategy $\mathcal{S}$ is given by
% An Anderson step with $m_k=0$ coincides with a Picard step, since in this case $\u_{k+1} = G(\u_k)$.
\begin{equation}\label{eq_new_gamma}
    \gamma_k = 1 - \max\left\{\frac{1}{k+2}, \frac{W_k}{2 W_0}\right\},
\end{equation}
{where $W_k = \max_{0\leq j\leq m_k} \|F_{k-m_k+j}\|$.}
\begin{theorem}
Let $\lambda\in (0,1)$, and suppose the sequence $\{\gamma_k\}$ in strategy $\mathcal{S}$ is generated by \eqref{eq_new_gamma}. Let $\{\u_k\}$ denote the sequence produced by GlobAA($m$). Then, the following conclusions hold:
\begin{itemize}
\item if Assumption~\ref{anderson-global} holds, then $\|F(\u_k)\| \rightarrow 0$;
\item if Assumption~2.12 holds, and $G$ is contractive with factor $c$ in a neighborhood $D$ of a fixed point $\u^*$, then $\u^*$ is the unique fixed point. Moreover, if $G$ is differentiable at $\u^*$, and Assumption~\ref{anderson-ass2} holds, then the conclusions of Theorems~\ref{second}, \ref{third} and \ref{fourth} remain valid.
\end{itemize}
\end{theorem}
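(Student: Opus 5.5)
The plan is to first check that the choice \eqref{eq_new_gamma} is admissible for Assumption~\ref{anderson-global-gamma} with $\Theta(t)=t/2$. The first bullet then follows from Theorem~\ref{first}. For the second bullet, I would localize the contractive analysis of Theorems~\ref{second}, \ref{third} and \ref{fourth} to the neighborhood $D$. I read ``Assumption~2.12'' as Assumption~\ref{anderson-global}.

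\textbf{Admissibility of $\gamma_k$.} The argument showing that $\{W_k\}$ is non-increasing in the proof of Theorem~\ref{first} uses only nonexpansiveness and $\gamma_k\le 1$, not the specific form of $\gamma_k$. Hence $0<W_k\le W_0$, where positivity holds because $F_k\neq0$. It follows that $\max\{1/(k+2),W_k/(2W_0)\}\in(0,1/2]$, so $\gamma_k\in[1/2,1)$. Moreover $\gamma_k\le 1-W_k/(2W_0)=1-\Theta(W_k/W_0)$, and $\Theta(t)=t/2$ is non-decreasing with $\Theta(0)=0$ and $\Theta(t)>0$ for $t>0$. Theorem~\ref{first} therefore yields $\|F(\u_k)\|\to0$, which is the first bullet.

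\textbf{Uniqueness of the fixed point.} Suppose $\v^*\neq\u^*$ is another fixed point. In Euclidean space the fixed point set of a nonexpansive map is convex, so $\mathbf{w}_t=\u^*+t(\v^*-\u^*)$ is a fixed point for every $t\in[0,1]$. For small $t>0$ we have $\mathbf{w}_t\in D$, and then
\[
\|\mathbf{w}_t-\u^*\|=\|G(\mathbf{w}_t)-G(\u^*)\|\le c\|\mathbf{w}_t-\u^*\|,
\]
which forces $\mathbf{w}_t=\u^*$, a contradiction.

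\textbf{Localization.} Fix $r>0$ with $B(\u^*,r)\subset D$. Take $\u$ with $\|\u-\u^*\|\ge r$ and let $\mathbf{z}$ be the point of the segment $[\u^*,\u]$ at distance $r$ from $\u^*$. Then
\[
\|G(\u)-\u^*\|\le\|\u-\mathbf{z}\|+\|G(\mathbf{z})-G(\u^*)\|\le\|\u-\u^*\|-r+cr,
\]
so $\|F(\u)\|\ge(1-c)r$. Since $\|F_k\|\to0$, eventually $\u_k\in B(\u^*,r)$, and there \eqref{eq_relationship} holds locally, giving $\|\u_k-\u^*\|\le\|F_k\|/(1-c)\to0$. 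This recovers Steps~1--2 of Theorem~\ref{second}.

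\textbf{Steps 3--4 and the rates.} Assumption~\ref{anderson-ass2}-(iii) and \eqref{eq2.6-1} show that the points $\sum_j\alpha_j^k\u_{k-m_k+j}$ and $\u_{\rm AA}^{k+1}$ lie within $O(W_k)$ of $\u^*$, hence in $D$ for large $k$. Thus the Lipschitz-$c$ estimate \eqref{eq2.5-1} and the expansion \eqref{eq_AA_estimate} are valid eventually, which gives \eqref{F_AA}.

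Step~4 needs only $\gamma_k\to1$, and this holds because both $1/(k+2)\to0$ and $W_k\to0$. The monotonicity and divergence conditions of Assumption~\ref{ass3} were used only in Step~1, which is now replaced by the first bullet. So condition \eqref{global-con} holds for all $k\ge K_0$. The proofs of Theorems~\ref{third} and~\ref{fourth} use only this tail behaviour together with the local estimates, so they carry over verbatim.

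The main obstacle is this last point: checking carefully that every use of the global contraction and of Assumption~\ref{ass3} in those proofs is either local, and hence valid inside $B(\u^*,r)$, or needs only $\gamma_k\to1$.
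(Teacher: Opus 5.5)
Your proposal is correct and follows the same skeleton as the paper. The first bullet comes from admissibility of \eqref{eq_new_gamma} with $\Theta(t)=t/2$ plus Theorem~\ref{first}, and the second from the lower bound $\|F(\u)\|\geq(1-c)r$ outside a ball in $D$, which forces the iterates into the region where the local estimates behind Theorems~\ref{second}, \ref{third} and \ref{fourth} hold. The differences are minor. You get uniqueness from convexity of the fixed point set of a nonexpansive map in Euclidean space, whereas the paper uses that same lower bound to exclude fixed points outside the ball. You also check only $\gamma_k\to1$ rather than all of Assumption~\ref{ass3}. That is legitimate, because monotonicity and $\sum_k(1-\gamma_k)=\infty$ enter only Step~1 of Theorem~\ref{second}, which the first bullet now supplies.
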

\begin{proof}
First, we prove that $0<\gamma_k<1$ for all $k\geq0$, while simultaneously showing that $\{W_k\}$ is non-increasing and $\{\gamma_k\}$ is non-decreasing. 
Note that $\gamma_k\leq 1-\frac{1}{k+2}<1$ for all $k\geq0$. It therefore remains to show that $\gamma_k>0$ for all $k\geq0$. We prove this by induction, where the case $k=0$ is immediate by $\gamma_0=1/2$. Suppose that $\gamma_k>0$ for all $0\leq k \leq K$. Then,
due to \eqref{eq1} and \eqref{KM-residual}, we have $\|F_{K+1}\|\leq W_K$, which implies $W_{K+1}\leq \max\{\|F_{K+1}\|, W_K\}\leq W_K$ and hence $\gamma_{K+1}\geq\gamma_K>0$, which completes the induction. Therefore, $0<\gamma_k<1$ for all $k\geq 0$, and $\{W_k\}$ is non-increasing.
Due to $\frac{1}{k+3}\leq\frac{1}{k+2}$ and $W_{k+1}\leq W_k$, we have 
    \[
    \max\left\{\frac{1}{k+3}, \frac{W_{k+1}}{2W_0} \right\}\leq \max\left\{\frac{1}{k+2}, \frac{W_{k}}{2W_0} \right\},
    \]
    which implies that $\gamma_{k+1}\geq\gamma_k$. 

Second, we prove $W_k\to 0$ and $\|F(\u_k)\|\to0$ as $k\to \infty$. 
To verify this, by Theorem \ref{first}, it suffices to check that the $\{\gamma_k\}$ in \eqref{eq_new_gamma} satisfies Assumption \ref{anderson-global-gamma}, where we only need to check the second condition in \eqref{eq_new_theta_1}. By \eqref{eq_new_gamma}, it follows that 
\[
1 - \gamma_k =\max\left\{\frac{1}{k+2}, \frac{W_k}{2W_0}\right\}\geq \frac{W_k}{2W_0},
\]
which yields \eqref{eq_new_theta_1} with $\Theta(t)=\frac{1}{2}t$.

Let $\mathcal{B}(\rho):=\{\u:\|\u-\u^*\|\leq \rho\}\subset D$. For any $\u\notin\mathcal{B}(\rho)$, by the nonexpansiveness of $G$ and its local contractivity, we have
\[
\|\u-\u^*\|\le \|F(\u)\|+\|\u-\v\|+c\|\v-\u^*\|=\|F(\u)\|+\|\u-\u^*\|-(1-c)\rho,
\]
where $\v$ is the point on the segment between $\u^*$ and $\u$ such that $\|\v-\u^*\|=\rho$, and then $\|F(\u)\|\geq (1-c)\rho$. 
The local contractivity ensures uniqueness of the fixed point in $\mathcal{B}(\rho)$, while the above bound excludes any fixed point outside $\mathcal{B}(\rho)$,
which implies that $\u^*$ is the unique fixed point.
Therefore, $\|F(\u_k)\|\to 0$ implies that $\u_k$ eventually lies in the neighborhood $D$. Since $G$ is locally contractive on $D$, we have $(1-c)\|\u_k-\u^*\|\leq \|F(\u_k)\|$, which yields $\u_k\to\u^*$. Together with Assumption~\ref{anderson-ass2}, we have $\sum\nolimits_{j=0}^{m_k}\alpha_j^k \u_{k-m_k+j}\to \u^*$ and $\u_{\rm AA}^{k+1}\to \u^*$. Consequently, for all sufficiently large $k$, all the points involved in the local estimates of Theorems~\ref{second}, \ref{third} and \ref{fourth} lie in $D$.

Finally, we establish the linear convergence of GlobAA($m$) when $G$ is contractive. It remains only to verify that the sequence $\{\gamma_k\}$ defined in \eqref{eq_new_gamma} satisfies Assumption \ref{ass3}.
Due to $\frac{1}{k+2}\to 0$ and $W_{k}\to 0$, we have 
    \[
    \gamma_k =1- \max\left\{\frac{1}{k+2}, \frac{W_k}{2W_0}\right\}\to 1.
    \]
    Since $1-\gamma_k \geq \frac{1}{k+2}$,
    we have 
    \[
    \sum_{k=0}^{\infty} (1-\gamma_k)\geq \sum_{k=0}^{\infty} \frac{1}{k+2}=\infty.
    \]
Thus, $\{\gamma_k\}$ defined in \eqref{eq_new_gamma} satisfies Assumption \ref{ass3}, which implies that Theorems~\ref{second}, \ref{third} and \ref{fourth} follow. 
\end{proof}

Although the class-agnostic update strategy applies to both contractive and nonexpansive mappings, the separate assumptions remain useful when the mapping class is known. In the contractive case, the contraction itself drives the residual toward zero, so Assumption~\ref{ass3} allows much greater freedom in choosing $\gamma_k$: the sequence ${\gamma_k}$ can be prescribed independently of the residuals and approach one, thereby making the acceptance criterion progressively less restrictive and enabling GlobAA($m$) to eventually reduce to pure AA.
For a merely nonexpansive mapping, no strict contraction is available to ensure residual decay. Assumption~\ref{anderson-global-gamma} therefore requires sufficient reduction from accepted Anderson steps whenever $W_k$ stays away from zero. This permits both constant and residual-dependent choices of $\gamma_k$, while providing the decrease needed for global residual convergence.
Thus, the two assumptions play distinct roles and are generally not interchangeable. A sequence satisfying Assumption~\ref{ass3} need not satisfy Assumption~\ref{anderson-global-gamma}, whereas a constant choice $\gamma_k=\xi<1$ does not satisfy Assumption~\ref{ass3} and may prevent the eventual reduction to pure AA.
   
The class-agnostic strategy is particularly useful when the mapping class is unknown a priori or when the mapping exhibits different behaviors globally and locally. 
For example, consider
\[
G(\u)=\u-(1-a)\tanh(\u)\quad \mbox{with $a\in(0,1)$}.\]
Its Jacobian satisfies
\[
G'(\u)={\rm diag}\big(a+(1-a)\tanh^2u_1,\ldots,a+(1-a)\tanh^2u_n\big),\]
so $\|G'(\u)\|\leq1$ and $\sup_{\u}\|G'(\u)\|=1$. Hence, $G$ is globally nonexpansive but not globally contractive. Moreover, $\u^*={\bf 0}$ is its unique fixed point and $\|G'(\u^*)\|=a<1$, so $G$ is locally contractive around $\u^*$.
This example illustrates the situation above: without any change of the parameter rule, the class-agnostic strategy accommodates the global nonexpansive regime while exploiting the local contractive behavior near the fixed point.
When more information on $G$ is available, the separate assumptions may allow more tailored parameter choices; otherwise, the class-agnostic strategy provides a single rule applicable across different regimes.

\section{Numerical experiments}\label{sec3}
In this section, we present four numerical experiments to demonstrate the effectiveness of the proposed algorithm GlobAA($m$), and evaluate its superiority against several existing algorithms as follows.
\begin{itemize}
    \item Picard: Picard iteration with $\u_{k+1} = G(\u_k)$.
    \item KM: Krasnosel'ski\oldbreve{\i}--Mann iteration with $\u_{k+1} = (1-\lambda) \u_k + \lambda{G}(\u_k)$, and we set $\lambda=0.5$ by default. 
    \item pureAA($m$): Pure Anderson acceleration with a fixed memory parameter $m$ and without any globalization strategy. 
    \item resAA($m$): Anderson acceleration with residual-based globalization in  \cite{fu2020anderson}. We mainly consider the global framework from \cite[Algorithm 3]{fu2020anderson}, which employs a residual-based acceptance mechanism and uses the default parameter settings recommended in \cite[Section 7]{fu2020anderson}.
    \item FAA($m$): Filtered AA in \cite{pollock2023filtering}, in which history columns are selectively removed to control the condition number of the least-squares problem in \eqref{eq_coff}. 
    Here, $c_s$ specifies the minimum allowable sine of the angle between each column and the subspace spanned by the more recent columns.
\end{itemize}

All our experiments are performed in Python on a MacBook Pro (M3 Max, 128 GB of RAM).  We stop the algorithms when
\begin{equation*}
% \label{stop}
\frac{\|F(\u_k)\|}{\|F(\u_0)\|}\leq {\rm tol} \quad {\rm or } \quad k\geq k_{\max},
\end{equation*}
for a given tolerance ${\rm tol}$ and maximum number of iterations $k_{\max}$. Define $\y_k = F_{k+1}-F_k$, $\d_k = \u_{k+1}-\u_k$, $Y_k = [\y_{k-m_k}, \dots, \y_{k-1}]$, and $D_k = [\d_{k-m_k}, \dots, \d_{k-1}]$. Throughout the experiments, we transform \eqref{eq_coff} into the unconstrained optimization problem as follows:
\begin{equation}\label{uncon_opt}
\min \limits\nolimits_{{\r} \in \mathbb{R}^{m_k}} \left\|{F}_k-Y_k {\r}\right\|
\end{equation}
and set
\[
\u_{k+1}={G}_k-\sum\nolimits_{j=0}^{m_k-1}r_j^k({G}_{k-m_k+j+1}-{G}_{k-m_k+j}),
\]
where ${\r}^k = (r_0^k, \dots, r_{m_k-1}^k)^\top$ is a solution of \eqref{uncon_opt}. 

We solve the unregularized least-squares subproblem~\eqref{uncon_opt} using the \textsf{LSQR} method \cite{paige1982lsqr}. In some cases, we also test a regularized variant, in which the $\ell_2$-regularization term considered in~\cite{fu2020anderson} is added to~\eqref{uncon_opt}, yielding the problem
\begin{equation}\label{eq3.2}
\min \nolimits_{{\r} \in \mathbb{R}^{m_k}} \left\|{F}_k-Y_k {\r}\right\|^2 + \eta(\|D_k\|_F^2+\|Y_k\|_F^2)\|{\r}\|^2,
\end{equation}
with $\eta > 0$. 
Intuitively, if the algorithm converges, the coefficient of the regularization term vanishes as $k\rightarrow \infty$.

In subsection~\ref{sec3.1}, we present a simple example demonstrating that pureAA($m$) may fail to converge even for a contractive mapping that is differentiable at its fixed point. In contrast, our GlobAA($m$) method, based on the proposed globalization framework, enjoys global convergence and outperforms the approach proposed in~\cite{fu2020anderson}. In subsection~\ref{sec3.2}, we demonstrate the robustness of the proposed algorithm on a real-world dataset and an ill-conditioned problem, aiming to assess its performance in practical and numerically challenging scenarios. In subsections~\ref{sec3.3} and~\ref{sec3.4}, we demonstrate its efficiency for implicit fixed point problems using two PDE-based numerical examples that require little problem-specific prior information. 

We use Iter. to denote the number of iterations performed by the algorithms. In the following tables of this section, the smallest number of iterations and CPU time for different cases are shown in bold. AA(\%) denotes the percentage of accepted Anderson steps among all executed iterations. 
${\dagger}$ indicates that the stopping tolerance was not reached within $k_{\max}$ iterations.
In the figures of subsections~\ref{sec3.2}-\ref{sec3.4}, 
the iteration at which GlobAA($m$) performs its final KM step is marked by a red $\textcolor[rgb]{1.00,0.00,0.00}{\star}$ and a red vertical dashed line.

\subsection{A non-convergence example of pure Anderson acceleration}\label{sec3.1}
Consider the following optimization problem:
\begin{equation}\label{counter_ex}
    \min_{\u\in\mathbb{R}^n} f(\u) = n \phi(s(\u)) + \frac{1}{2}\|P\u\|^2,
\end{equation}
where $s(\u) = \frac{1}{n}\bf{1}^\top \u$ and $P = I - \frac{1}{n}\bf{1}\bf{1}^\top$ with ${\bf{1}} = (1,1,\dots,1)^\top \in \mathbb{R}^n$, and $\phi: \mathbb{R} \rightarrow \mathbb{R}$ is a one-dimensional counterexample constructed in \cite{mai2020anderson}, defined as follows: 
\begin{equation*}
    \phi(t)=
    \begin{cases}
        \frac{t^2}{20}-24.9t-12.45, & t<-1,\\
        12.5 t^2, & -1\leq t<1,\\
        \frac{t^2}{20}+24.9t-12.45, & t\geq1.
    \end{cases}
\end{equation*}
The gradient descent method with a fixed step size for problem \eqref{counter_ex} can be equivalently viewed as the Picard iteration for the following fixed point problem:
\begin{equation}\label{eq3.5}
    \u = G(\u) := \u - \tau \nabla f(\u),
\end{equation}
where $\tau > 0$ is the step size.
By $\nabla f(\u) = \phi'(s(\u)){\bf{1}} + P\u$, it is easy to verify that $f$ in \eqref{counter_ex} is strongly convex with $\rho = 1/10$ and $L$-smooth with $L=25$. Then, for $\tau \in (0, \frac{2}{\rho+L})$, $G$ is a contractive mapping with $c = \sqrt{1-\frac{2\rho\tau L}{\rho+L}}$. Following \cite{mai2020anderson}, we set $\tau = 1/L$, and $G$ has the unique fixed point at $\u^*=0$.

For solving \eqref{eq3.5}, the following proposition characterizes how the convergence behavior of pureAA(1) varies with the choice of the initial iterate.
\begin{proposition}\label{prop:counter_ex}
    Denote $\mathcal{D}:={\rm span}\{{\bf{1}}\}=\{t{\bf{1}} : t \in \mathbb{R}\}$. For any initial iterate \(\u_0=t_0 \bf{1}\in\mathcal D\), the iterates generated by pureAA(1) for solving problem \eqref{eq3.5} remain in \(\mathcal D\).
    In particular, if \(t_0\in[2.01,246.98]\), then the sequence \(\{\u_k\}\) does not converge to the unique fixed point, and it
    approaches the following period-four orbit:
    \begin{equation}\label{eq_pureAA_res}
    \begin{aligned}
    &\u_{4k+3} \rightarrow -249(\sqrt{5}-2){\bf{1}}, \quad \u_{4k+4} \rightarrow +249{\bf{1}},\\
    &\u_{4k+5} \rightarrow +249(\sqrt{5}-2){\bf{1}}, \quad \u_{4k+6} \rightarrow -249{\bf{1}}.
    \end{aligned}
    \end{equation}
\end{proposition}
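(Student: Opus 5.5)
The plan is to reduce everything to a scalar recurrence on the line $\mathcal D$, and then analyse that recurrence using the fact that $\phi'$ is piecewise affine.

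\textbf{Step 1: invariance of $\mathcal D$.} For $\u=t\mathbf 1$ we have $P\u=0$ and $s(\u)=t$. Hence $\nabla f(\u)=\phi'(t)\mathbf 1$ and
\[
G(t\mathbf 1)=g(t)\mathbf 1,\qquad g(t):=t-\tfrac{1}{25}\phi'(t).
\]
So $F(t\mathbf 1)=(g(t)-t)\mathbf 1$ as well. The pureAA(1) update \eqref{eq_pureAA}--\eqref{eq_subprob} forms an affine combination of $G$-values, with coefficients depending only on the residuals. If the two most recent iterates lie in $\mathcal D$, both $G$-values lie in $\mathcal D$, and therefore so does $\u_{k+1}$. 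The same holds for the first step $\u_1=G(\u_0)$. By induction all iterates stay in $\mathcal D$.

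The iteration then reduces to the scalar secant-type AA(1) recurrence for $g$:
\[
t_{k+1}=g(t_k)-\alpha^k\bigl(g(t_k)-g(t_{k-1})\bigr),\qquad \alpha^k \text{ given by \eqref{eq_alpha} in one dimension}.
\]
This is exactly the secant step applied to $f(t)=g(t)-t$, expressed through $g$-values. In particular, whenever $f$ is affine on an interval containing $t_{k-1}$ and $t_k$, the step returns the zero of that affine piece.

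\textbf{Step 2: the explicit piecewise form of $g$.} A direct computation gives $g(t)=0$ on $[-1,1]$ and
\[
g(t)=0.996\,(t-\operatorname{sign} t)\quad\text{for } |t|\ge 1.
\]
Consequently $f(t)=-0.004\,t-0.996\operatorname{sign}t$ for $|t|\ge1$, and both $g$ and $f$ are odd.

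\textbf{Step 3: the first iterates.} For $t_0\in[2.01,246.98]$ we get $t_1=0.996(t_0-1)\ge 1$. Thus $t_0,t_1$ lie on the same affine branch, and the secant step returns the zero of $-0.004t-0.996$, namely $t_2=-249$.

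From here on I expect the consecutive pairs to straddle the two outer branches. For a pair with $t_{k-1}$ and $t_k$ on opposite outer branches, I will write the secant formula in closed form using $f(t)=-0.004t\mp0.996$. Because $f$ is odd, the map $T:(t_{k-1},t_k)\mapsto(t_k,t_{k+1})$ commutes with $(a,b)\mapsto(-a,-b)$. I will therefore look for an orbit of the form $(a,b)\mapsto(b,-a)\mapsto(-a,-b)\mapsto\cdots$.

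Imposing $T(-249\beta,249)=(249,249\beta)$, with $\beta>0$, should give the quadratic $\beta^2+4\beta-1=0$, hence $\beta=\sqrt5-2$. It remains to check that the points involved are consistent with the branch assumptions (they have the required signs and modulus at least $1$, or the inner branch is irrelevant to $g$ there), and that the $g$-values match. The range endpoints $2.01$ and $246.98$ presumably come from requiring that $t_3$ and all later iterates stay on the intended branches and never enter $(-1,1)$.

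\textbf{Step 4: convergence to the cycle (main obstacle).} The hardest part is proving that the orbit actually converges to this cycle for every $t_0$ in the range. Here is what I would try first.

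Since $f$ is affine on each outer branch, each step is a Möbius-type map of a scalar quantity. Set $r_k:=t_k/t_{k-1}$, or better, use the shifted variable measured relative to $\pm249$. After the symmetry reduction, the two-step map should become a one-dimensional fractional-linear map $h$ with fixed point corresponding to $-(\sqrt5+2)$ and $\sqrt5-2$. I would then show $|h'|<1$ at that fixed point and that $h$ maps the interval of ratios arising from $t_0\in[2.01,246.98]$ into itself. This gives monotone or contractive convergence, which yields \eqref{eq_pureAA_res}.

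The delicate points are the verification that every iterate stays off $(-1,1)$, which is the origin of the numerical endpoints, and the matching of indices $4k+3,\dots,4k+6$ with the starting value $t_2=-249$. Both are finite but careful interval computations.
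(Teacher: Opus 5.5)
Your route is correct but genuinely different from the paper's. The paper proves only what your Step 1 proves: $\mathcal D$ is invariant, and since all residuals are multiples of $\mathbf 1$, the Anderson coefficients coincide with those of the scalar problem. It then obtains the non-convergence and the period-four orbit by quoting \cite[Proposition~1]{mai2020anderson}. You instead re-derive that proposition from the fact that AA(1) in one dimension is the secant method on $g(t)-t$. The paper's proof is short but rests entirely on the external result. Yours is self-contained and explains where $\sqrt5-2$ and the endpoints come from. Your computations are right: $g(t)=\tfrac{249}{250}(t-\operatorname{sign}t)$ for $|t|\ge1$, $t_2=-249$, and $\beta^2+4\beta-1=0$.

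As written, though, Step 4 is only a plan, and one expectation in it is wrong: consecutive pairs do not all straddle the two branches, they alternate. For $t_0$ in the range one gets $t_3\in[-82.6,-1]$, so $(t_2,t_3)$ lies on the left branch and $t_4=249$ exactly. Inductively, $t_{2j}=(-1)^j\,249$ for all $j\ge1$. This closes Step 4. With $\beta_j:=|t_{2j-1}|/249$, a single straddling secant step gives
\[
\beta_{j+1}=h(\beta_j),\qquad h(\beta):=\frac{1-\beta}{3+\beta},
\]
whose fixed points are $\sqrt5-2$ and $-\sqrt5-2$. The branch condition $|t_{2j+1}|\ge1$ is equivalent to $\beta_j\le 246/250$. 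The map $h$ sends $[1/249,\,246/250]$ into $[1/249,\,1/3]$, and there $|h'(\beta)|=4/(3+\beta)^2\le 4/9$. This uniform bound on the whole invariant interval, not merely $|h'|<1$ at the fixed point, is what yields $\beta_j\to\sqrt5-2$. The only constraints on $t_0$ are $t_1\ge1$ and $\beta_1\le 246/250$, i.e.\ $t_0\in[499/249,\,247]\supset[2.01,246.98]$. The sign pattern $t_{2j-1}=(-1)^{j-1}\,249\,\beta_j$ then gives exactly \eqref{eq_pureAA_res}. Finally, $g(t_k)-t_k\neq g(t_{k-1})-t_{k-1}$ at every step, because this function is injective on the outer branches and $t_k\neq t_{k-1}$, so \eqref{eq_alpha} applies throughout.
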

\begin{proof}
    We first prove by induction that \(\mathcal{D}\) is invariant under the iteration of pureAA($1$).
    The conclusion clearly holds for $k=0$. Suppose that $\u_i\in \mathcal{D}$ for all $i=0,1,\dots,k$. For simplicity, we denote $\u_i = t_i {\bf{1}}$ for some $t_i\in \mathbb{R}$. Then, we have $s(\u_i) = t_i$ and $P\u_i = \mathbf{0}$ for all $i=0,1,\dots,k$.
    Together with $\nabla f(\u_i) = \phi'(t_i){\bf{1}}$, we have $\nabla f(\u_i)\in \mathcal{D}$ for all $i=0,1,\dots,k$. Thus, we have 
    \begin{equation}\label{sec31_eq1}
        G(\u_i) = G(t_i {\bf{1}}) = t_i {\bf{1}} - \tau \nabla f(t_i {\bf{1}})=(t_i - \tau \phi'(t_i)){\bf{1}},
    \end{equation}
    which means $G(\u_i)\in \mathcal{D}$ for all $i=0,1,\dots,k$. From \eqref{eq_pureAA}, we have $\u_{k+1}\in \mathcal{D}$. 
    
    By \eqref{sec31_eq1} and $\u_i \in \mathcal{D}$, $\mathcal{D}$ is invariant under $F$. Thus, the combination coefficients in \eqref{uncon_opt} coincide with those of the one-dimensional problem in \cite{mai2020anderson}, which implies that the iteration of \eqref{eq_pureAA} for \eqref{eq3.5} coincides with the one-dimensional Anderson iteration in \cite{mai2020anderson}.
    By \cite[Proposition~1]{mai2020anderson}, for any initial iterate $\u_0=t_0 \bf{1}$ with $t_0\in[2.01, 246.98]$, the sequence generated by pureAA(1) fails to converge to the fixed point and approaches a period-four orbit as in \eqref{eq_pureAA_res}.
\end{proof}
    
\textbf{Implementation details.} We set $n=200$, $k_{\max} = 500$, ${\rm tol} = 10^{-8}$, and compare the proposed GlobAA($m$) with Picard iteration, pureAA($m$) and resAA($m$). 
We consider the memory sizes $m\in\{1,3,5\}$ and three initial iterates
\[
\u_0\sim\mathcal{N}(0,I_n), \quad \u_0 = 2.1 \,{\bf{1}}, \quad \u_0 = 300 \,{\bf{1}}.
\]
We first investigate the influence of $\gamma_k$ on the performance of GlobAA($m$) by testing several choices of $\gamma_k$ that satisfy Assumption~\ref{ass3}.
For resAA($m$), we use the parameter settings recommended in \cite[Section 7]{fu2020anderson}.

We then choose $\gamma_k = \frac{k+300}{k+301}$, $\lambda=1$ and $\eta = 0$ in GlobAA($m$) for the subsequent comparisons, and focus on the two initial iterates in the invariant subspace $\mathcal{D}={\rm span}\{\bf{1}\}$. 
We compare the four algorithms for these two representative initial iterates, respectively, and investigate the influence of the memory size $ m$. We also indicate the step type at each iteration of GlobAA($m$) along the $x$-axis, where $\textcolor[rgb]{1.00,0.00,0.00}{\star}$ denotes an iteration in which an Anderson step is performed, while $\textcolor[rgb]{0.11,0.61,0.88}{ \circ}$ denotes an iteration using a Picard/KM step.

\textbf{Experimental results.} Based on the numerical results reported in Table~\ref{tab3.1:gamma_sweep} and Figs. \ref{fig3_1}--\ref{fig3_3}, we make the following observations.
\begin{itemize}
\item For the general initial iterate $\u_0\sim\mathcal{N}(0,I_n)$, although all the algorithms meet the stopping criterion within $k_{\max}$ iterations, the Anderson-based methods, including pureAA($m$), resAA($m$) and GlobAA($m$), generally require substantially fewer iterations than Picard, illustrating the acceleration effect of Anderson methods.
\item GlobAA($m$) converges for all the tested settings, including different choices of the initial iterate, the memory size $m$, and the parameter $\gamma_k$, while the choice of $\gamma_k$ can noticeably affect the number of iterations and the percentage of accepted Anderson steps. In contrast, the other two Anderson acceleration methods, pureAA($m$) and resAA($m$), fail to converge for some cases. 
\item Figs.~\ref{fig3_1}--\ref{fig3_2} show the convergence behavior of the tested algorithms in two different settings, where $\gamma_k = \frac{k+300}{k+301}$ is used for GlobAA($m$).
As shown in Fig.~\ref{fig3_1}, the behavior of pureAA(1) with $\u_0=2.1 \,{\bf{1}}$ is fully consistent with the statements in Proposition~\ref{prop:counter_ex}: instead of converging to the unique fixed point, the iterates approach the period-four orbit in \eqref{eq_pureAA_res}. Interestingly, a similar period-four behavior is also observed for pureAA(1) with $\u_0=300\,{\bf{1}}$, as illustrated in Fig.~\ref{fig3_2}, although this initial iterate lies outside the interval covered by Proposition~\ref{prop:counter_ex}. In both cases, resAA(1) exhibits essentially the same behavior as pureAA(1), with its Anderson steps continuously accepted, and therefore fails to eliminate the periodic behavior within the maximum number of iterations. In contrast, GlobAA(1) converges to the fixed point from both initial iterates. As illustrated by the step histories, the globalization mechanism rejects unsuitable Anderson steps and invokes fallback steps when necessary, thereby safeguarding the iteration against persistent non-convergent behavior.
\item Fig.~\ref{fig3_3} compares the algorithms with $\u_0=2.1 \,{\bf{1}}$ for different memory sizes $m$. We see that pureAA(3) and resAA(3) remain non-convergent over the tested iterations, whereas both methods converge when $m=5$, illustrating their sensitivity to the memory size. In contrast, GlobAA($m$) converges rapidly for both $m=3$ and $m=5$. Together with the results above, these experiments demonstrate that the proposed globalization strategy improves the robustness of Anderson acceleration.
\end{itemize}

\begin{table}[htbp]
\centering
\caption{Numerical results of different algorithms for solving \eqref{eq3.5}}
\label{tab3.1:gamma_sweep}
\tiny
\setlength{\tabcolsep}{3pt}
\renewcommand{\arraystretch}{1.12}

% \resizebox{\textwidth}{!}{
\begin{tabular}{llc cc cc cc}
\toprule
\multirow{2}{*}{Case}
& \multirow{2}{*}{Algorithm}
& \multirow{2}{*}{$\gamma_k$}
& \multicolumn{2}{c}{$m=1$}
& \multicolumn{2}{c}{$m=3$}
& \multicolumn{2}{c}{$m=5$} \\
\cmidrule(lr){4-5}
\cmidrule(lr){6-7}
\cmidrule(lr){8-9}
&
&
& Iter. & AA(\%)
& Iter. & AA(\%)
& Iter. & AA(\%) \\
\midrule

% ============================================================
% u_0 = randn(seed=42)
% ============================================================
\multirow{9}{*}{\shortstack{$\u_0\sim\mathcal{N}(0,I_n)$}}
& \rule[-0.8ex]{0pt}{3.2ex}Picard 
& --
& 439 & 0.00
& 439 & 0.00
& 439 & 0.00 \\

& \rule[-0.8ex]{0pt}{3.2ex}pureAA($m$)
& --
& \textbf{3} & 100.00
& \textbf{3} & 100.00
& \textbf{3} & 100.00 \\

& \rule[-0.8ex]{0pt}{3.2ex}resAA($m$)
& --
& \textbf{3} & 100.00
& 4 & 100.00
& 4 & 100.00 \\
\noalign{\vskip 0.6ex}
\cdashline{2-9}[2.5pt/1.5pt]
\noalign{\vskip 0.6ex}

& \rule[-0.8ex]{0pt}{3.2ex}GlobAA($m$)
& $1-\tfrac{1}{k+2}$
& 26 & 7.69
& 26 & 7.69
& 26 & 7.69 \\

& \rule[-0.8ex]{0pt}{3.2ex}GlobAA($m$)
& $1-\tfrac{1}{k+301}$
& \textbf{3} & 100.00
& \textbf{3} & 100.00
& \textbf{3} & 100.00 \\

& \rule[-0.8ex]{0pt}{3.2ex}GlobAA($m$)
& $1-\tfrac{1}{5k+2}$
& 7 & 28.57
& 7 & 28.57
& 7 & 28.57 \\

& \rule[-0.8ex]{0pt}{3.2ex}GlobAA($m$)
& $1-\tfrac{1}{100k+2}$
& \textbf{3} & 66.67
& \textbf{3} & 66.67
& \textbf{3} & 66.67 \\

& \rule[-0.8ex]{0pt}{3.2ex}GlobAA($m$)
& $1-\tfrac{1}{\sqrt{k+2}}$
& 439 & 0.00
& 439 & 0.00
& 439 & 0.00 \\

& \rule[-0.8ex]{0pt}{3.2ex}GlobAA($m$)
& $1-\tfrac{1}{10\log(k+2)}$
& \textbf{3} & 100.00
& \textbf{3} & 100.00
& \textbf{3} & 100.00 \\

\midrule

% ============================================================
% u_0 = 2.1 * 1
% ============================================================
\multirow{9}{*}{$\u_0=2.1\,\mathbf{1}$}
& \rule[-0.8ex]{0pt}{3.2ex}Picard
& --
& \textbf{3} & 0.00
& \textbf{3} & 0.00
& \textbf{3} & 0.00 \\

& \rule[-0.8ex]{0pt}{3.2ex}pureAA($m$)
& --
& $500^{\dagger}$ & 100.00
& $500^{\dagger}$ & 100.00
& 32 & 100.00 \\

& \rule[-0.8ex]{0pt}{3.2ex}resAA($m$)
& --
& $500^{\dagger}$ & 100.00
& $500^{\dagger}$ & 100.00
& 31 & 100.00 \\
\noalign{\vskip 0.6ex}
\cdashline{2-9}[2.5pt/1.5pt]
\noalign{\vskip 0.6ex}

& \rule[-0.8ex]{0pt}{3.2ex}GlobAA($m$)
& $1-\tfrac{1}{k+2}$
& 4 & 75.00
& 6 & 83.33
& 8 & 87.50 \\

& \rule[-0.8ex]{0pt}{3.2ex}GlobAA($m$)
& $1-\tfrac{1}{k+301}$
& \textbf{3} & 66.67
& \textbf{3} & 66.67
& \textbf{3} & 66.67 \\

& \rule[-0.8ex]{0pt}{3.2ex}GlobAA($m$)
& $1-\tfrac{1}{5k+2}$
& 4 & 75.00
& 6 & 83.33
& 8 & 87.50 \\

& \rule[-0.8ex]{0pt}{3.2ex}GlobAA($m$)
& $1-\tfrac{1}{100k+2}$
& 4 & 75.00
& 6 & 83.33
& 8 & 87.50 \\

& \rule[-0.8ex]{0pt}{3.2ex}GlobAA($m$)
& $1-\tfrac{1}{\sqrt{k+2}}$
& 4 & 75.00
& 6 & 83.33
& 8 & 87.50 \\

& \rule[-0.8ex]{0pt}{3.2ex}GlobAA($m$)
& $1-\tfrac{1}{10\log(k+2)}$
& 4 & 75.00
& 6 & 83.33
& 8 & 87.50 \\

\midrule

% ============================================================
% u_0 = 300 * 1
% ============================================================
\multirow{9}{*}{$\u_0=300\,\mathbf{1}$}
& \rule[-0.8ex]{0pt}{3.2ex}Picard 
& --
& 198 & 0.00
& 198 & 0.00
& 198 & 0.00 \\

& \rule[-0.8ex]{0pt}{3.2ex}pureAA($m$)
& --
& $500^{\dagger}$ & 100.00
& 24 & 100.00
& \textbf{30} & 100.00 \\

& \rule[-0.8ex]{0pt}{3.2ex}resAA($m$)
& --
& $500^{\dagger}$ & 100.00
& 21 & 100.00
& \textbf{30} & 100.00 \\
\noalign{\vskip 0.6ex}
\cdashline{2-9}[2.5pt/1.5pt]
\noalign{\vskip 0.6ex}

& \rule[-0.8ex]{0pt}{3.2ex}GlobAA($m$)
& $1-\tfrac{1}{k+2}$
& 199 & 1.51
& 201 & 2.49
& 203 & 3.45 \\

& \rule[-0.8ex]{0pt}{3.2ex}GlobAA($m$)
& $1-\tfrac{1}{k+301}$
& \textbf{32} & 56.25
& 24 & 100.00
& \textbf{30} & 100.00 \\

& \rule[-0.8ex]{0pt}{3.2ex}GlobAA($m$)
& $1-\tfrac{1}{5k+2}$
& 199 & 38.19
& 201 & 38.81
& 203 & 39.41 \\

& \rule[-0.8ex]{0pt}{3.2ex}GlobAA($m$)
& $1-\tfrac{1}{100k+2}$
& 36 & 52.78
& \textbf{20} & 80.00
& 33 & 90.91 \\

& \rule[-0.8ex]{0pt}{3.2ex}GlobAA($m$)
& $1-\tfrac{1}{\sqrt{k+2}}$
& 199 & 1.51
& 201 & 2.49
& 203 & 3.45 \\

& \rule[-0.8ex]{0pt}{3.2ex}GlobAA($m$)
& $1-\tfrac{1}{10\log(k+2)}$
& 199 & 1.51
& 201 & 2.49
& 203 & 3.45 \\

\bottomrule
\end{tabular}
% }
\end{table}

\begin{figure}[htbp]
    \centering
    \includegraphics[width=0.99\textwidth]{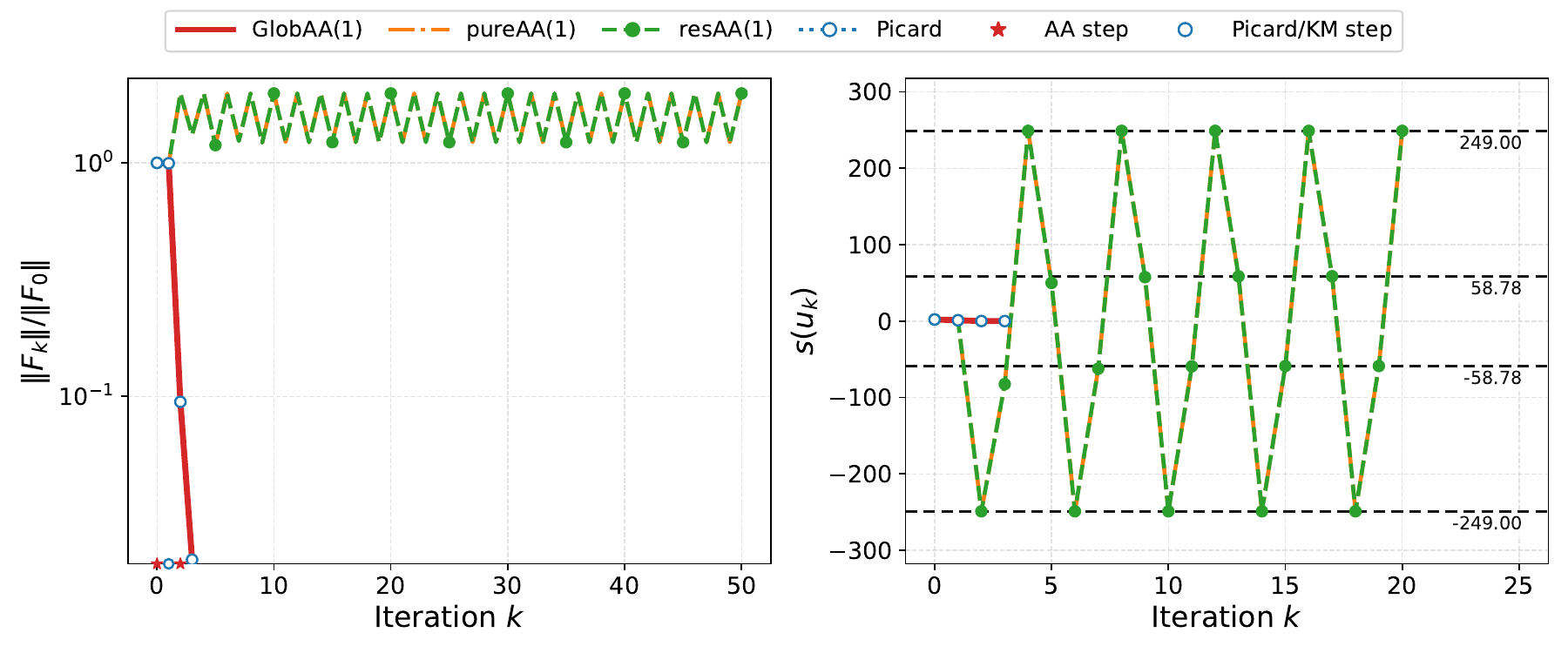}
    \vspace{-8pt}
    \caption{Convergence results of different algorithms for $\u_0 = 2.1 \,{\bf{1}}$.}\label{fig3_1}
\end{figure}

\begin{figure}[htbp]

    \centering
    \includegraphics[width=0.99\textwidth]{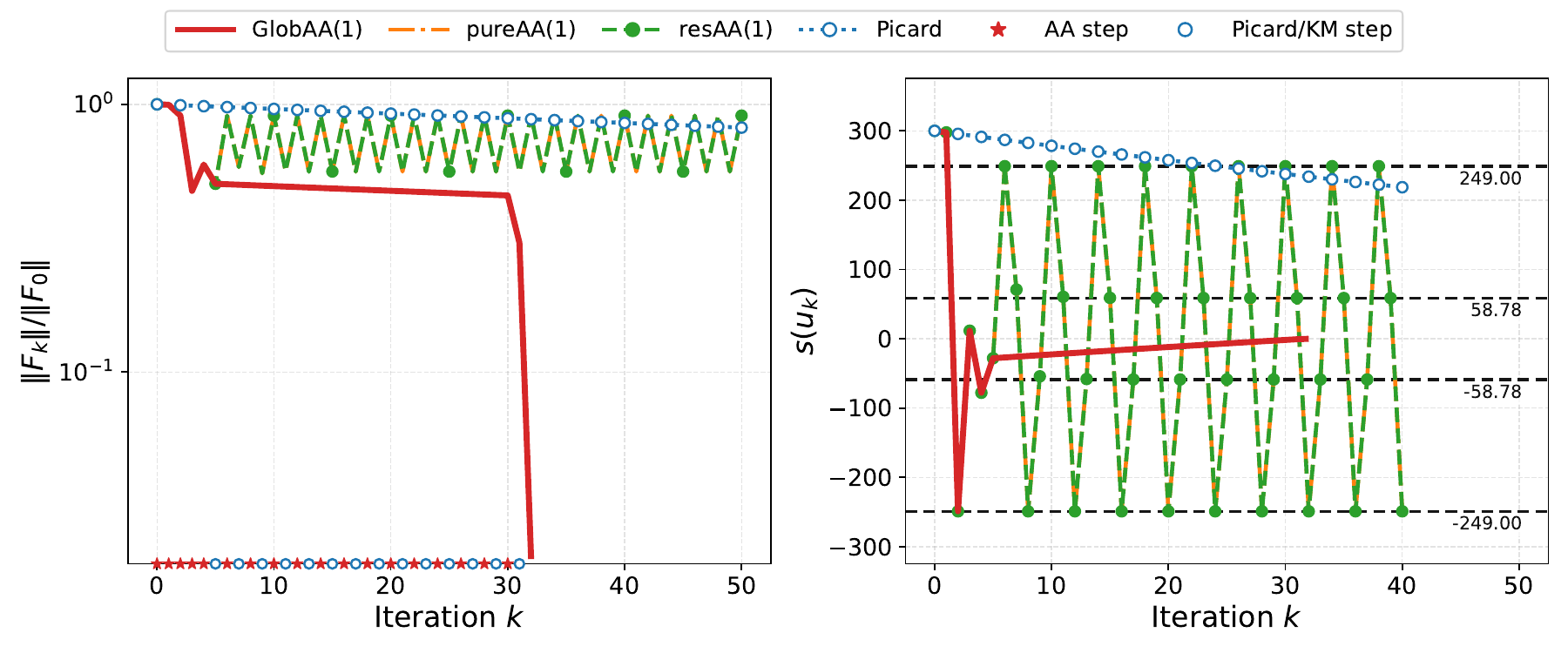}
    \caption{Convergence results of different algorithms for $\u_0 = 300 \,{\bf{1}}$.}\label{fig3_2}
\end{figure}

\begin{figure}[htbp]
    \centering
    \includegraphics[width=0.99\textwidth]{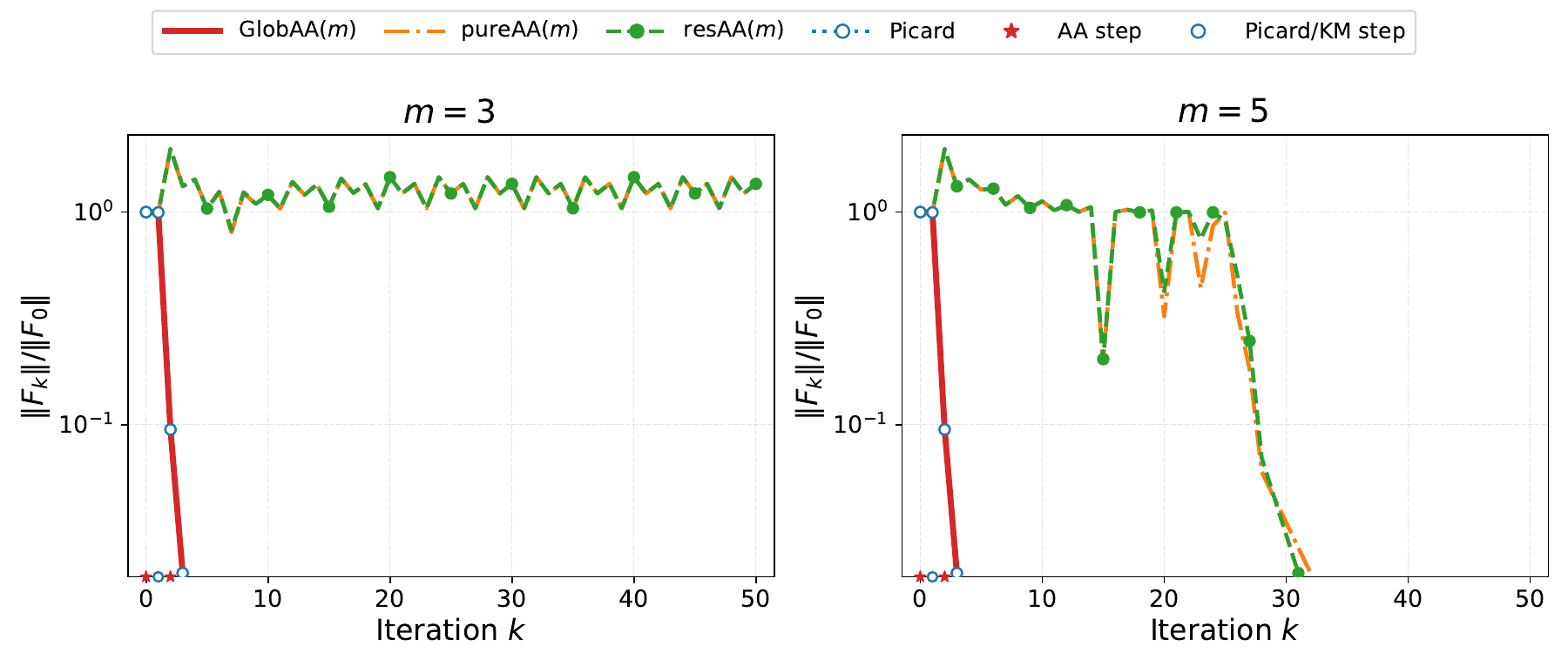}
    \caption{Convergence results of different algorithms for different values of $m$.}\label{fig3_3}
\end{figure}

\subsection{Elastic net regression}\label{sec3.2} 
In this section, we consider the following elastic net regression (ENR) problem \cite{zou2005regularization}:
\begin{equation}\label{ENR}
    \min_{\u\in\mathbb{R}^n} \frac{1}{2}\|A\u-\b\|^2 + \mu\left(\frac{1-{\beta}}{2}\|\u\|^2 + {\beta}\|\u\|_1\right),
\end{equation}
where $A\in\mathbb{R}^{M\times n}$ and $\b\in\mathbb{R}^M$. Following \cite{o2016conic, zhang2020globally}, we define $\mu_{\rm max}=\|A^\top\b\|_{\infty}$, which is used as a reference scale for selecting the regularization parameter $\mu$.
ENR is a regularized regression method that combines the strengths of Lasso ($\ell_1$) and Ridge ($\ell_2$) regression to handle multicollinearity and perform variable selection. 
Applying the iterative shrinkage-thresholding algorithm (ISTA) \cite{beck2009fast} to solve the ENR problem in \eqref{ENR} yields the following fixed point iteration:
\begin{equation*}
    \u_{k+1} = G(\u_k) := \mathcal{S}_{\tau\mu\beta}\left(\u_k - \tau [A^\top(A\u_k - \b)+\mu(1-\beta)\u_k]\right),
\end{equation*}
where $\mathcal{S}_{\tau\mu\beta}$ is the soft-thresholding operator defined as $\mathcal{S}_{\tau\mu\beta}(\u)_i = \text{sign}(\u_i)(|\u_i| - \tau\mu\beta)_+$ for $i=1,\ldots,n$.

\textbf{Implementation details.} In this subsection, we generate a random vector $\boldsymbol{X}\in\mathbb{R}^n$ with sparsity 0.1, and set $\b$ by $\b = A \boldsymbol{X} + 0.1 \boldsymbol{\omega}$, where the matrix $A$ is given in each set of experiments, and $\boldsymbol{\omega}\in\mathbb{R}^M$ is a standard Gaussian random vector, i.e., $\boldsymbol{\omega}\sim\mathcal{N}(0,I_M)$.
Throughout this subsection, we test GlobAA($m$) both with and without regularization, where the regularized GlobAA($m$) uses the same regularization term as resAA(\(m\)) in the Anderson coefficient subproblem, and we use $\u_0 = {0}$ as the initial iterate for all compared algorithms. We consider the following two numerical experiments for the ENR problem \eqref{ENR}.
\begin{itemize}
    \item Exp. 1 ($G$ is a contractive mapping): Set ${\rm tol} = 10^{-10}$, $k_{\max} = 2000$, $\beta = 1/2$, $\mu = 10^{-5}\mu_{\rm max}$, and $\tau = 1.8/L$ with {\bf $L = \lambda_{\max}(A^\top A) + \mu/2$}. With this definition of $L$, the mapping $G$ is contractive for any $\tau\in(0,2/L)$. We choose $\gamma_k = \frac{k+1}{k+2}$ and $\lambda=1$ for GlobAA($m$), and use $\eta=10^{-8}$ in \eqref{eq3.2} for both GlobAA($m$) with regularization and resAA($m$). Matrix $A\in \mathbb{R}^{2000\times 500}$ is taken from the real-world Madelon dataset\footnote[1]{The Madelon dataset is available from \url{https://archive.ics.uci.edu/dataset/171/madelon}}.
    \item Exp. 2 ($G$ is a nonexpansive mapping): Set ${\rm tol} = 10^{-10}$, $k_{\max} = 25000$, $\beta = 1$, $\mu = 10^{-3}\mu_{\rm max}$, and $\tau = 2/L$ with $L = \lambda_{\max}(A^\top A)$. We choose $\gamma_k = 1-10^{-5}<1$ and $\lambda=0.5$ for GlobAA($m$), and use $\eta=10^{-10}$ in \eqref{eq3.2} for both GlobAA($m$) with regularization and resAA($m$). We generate $A\in\mathbb{R}^{1000\times2000}$ with random orthonormal singular vectors and singular values logarithmically spaced from $1$ to $10^{-6}$, yielding a condition number of $10^6$.
\end{itemize}

\textbf{Experimental results.} Table~\ref{tab3.4:comparison} together with Figs.~\ref{fig3.2-1}--\ref{fig3.2-2} shows the convergence results for the two experiments and indicates that GlobAA($m$) is more robust than the comparison methods.
For both contractive and nonexpansive cases, GlobAA($m$) with regularization consistently reaches the prescribed tolerance and generally 
requires less CPU time than the compared methods, with its advantage becoming increasingly pronounced as $m$ grows, while the competing methods Picard, KM, pureAA($m$), and resAA($m$) may fail to converge for large $m$.
Moreover, GlobAA($m$) with regularization requires substantially less CPU time in most cases, while the corresponding iteration counts remain generally comparable.
In particular, for the nonexpansive case, Picard fails to meet the stopping criterion for all tested values of $m$, while pureAA($m$) fails for most of the tested values of $m$.

% k+1/k+2
% \vspace{-0.03in}
\begin{table}[htbp]
\centering
\caption{Numerical results of different algorithms for the ENR problem in \eqref{ENR}}
\label{tab3.4:comparison}
\tiny
\setlength{\tabcolsep}{2pt}
\renewcommand{\arraystretch}{1.12}
\resizebox{\textwidth}{!}{
\begin{tabular}{llc ccc ccc ccc ccc ccc}
\toprule
\multirow{2}{*}{Experiment}
& \multirow{2}{*}{Algorithm}
& \multirow{2}{*}{$\eta$}
& \multicolumn{3}{c}{$m=3$}
& \multicolumn{3}{c}{$m=5$}
& \multicolumn{3}{c}{$m=10$}
& \multicolumn{3}{c}{$m=20$}
& \multicolumn{3}{c}{$m=30$} \\
\cmidrule(lr){4-6}
\cmidrule(lr){7-9}
\cmidrule(lr){10-12}
\cmidrule(lr){13-15}
\cmidrule(lr){16-18}
&
&
& Iter. & Time(s) & AA(\%)
& Iter. & Time(s) & AA(\%)
& Iter. & Time(s) & AA(\%)
& Iter. & Time(s) & AA(\%)
& Iter. & Time(s) & AA(\%) \\
\midrule

\multirow{6}{*}{\shortstack{Exp. 1\\Madelon\\contractive}}
& Picard
& $0$
& $2000^{\dagger}$ & 0.17 & 0.00
& $2000^{\dagger}$ & 0.17 & 0.00
& $2000^{\dagger}$ & 0.17 & 0.00
& $2000^{\dagger}$ & 0.17 & 0.00
& $2000^{\dagger}$ & 0.17 & 0.00 \\

& KM, $\lambda=0.5$
& $0$
& $2000^{\dagger}$ & 0.17 & 0.00
& $2000^{\dagger}$ & 0.17 & 0.00
& $2000^{\dagger}$ & 0.17 & 0.00
& $2000^{\dagger}$ & 0.17 & 0.00
& $2000^{\dagger}$ & 0.17 & 0.00 \\

& pureAA$(m)$
& $0$
& 629 & 0.24 & 100.00
& 645 & 0.33 & 100.00
& $2000^{\dagger}$ & 2.07 & 100.00
& $2000^{\dagger}$ & 3.78 & 100.00
& $2000^{\dagger}$ & 5.62 & 100.00 \\

& resAA$(m)$
& $10^{-8}$
& \textbf{620} & \textbf{0.12} & 100.00
& 761 & \textbf{0.14} & 100.00
& 1479 & 0.29 & 100.00
& $2000^{\dagger}$ & 0.45 & 100.00
& $2000^{\dagger}$ & 0.55 & 100.00 \\

& GlobAA$(m)$
& $0$
& 767 & 0.23 & 55.54
& \textbf{554} & 0.17 & 37.00
& 522 & 0.20 & 23.37
& 199 & 0.24 & 84.92
& \textbf{175} & 0.32 & 93.71 \\

& GlobAA$(m)$
& $10^{-8}$
& 797 & 0.15 & 76.91
& 995 & 0.19 & 99.40
& \textbf{221} & \textbf{0.05} & 97.74
& \textbf{173} & \textbf{0.04} & 97.11
& 223 & \textbf{0.06} & 85.65 \\

\midrule

\multirow{6}{*}{\shortstack{Exp. 2\\Ill-conditioned\\($\kappa(A)=10^6$)\\nonexpansive}}
& Picard
& $0$
& $25000^{\dagger}$ & 6.56 & 0.00
& $25000^{\dagger}$ & 6.56 & 0.00
& $25000^{\dagger}$ & 6.56 & 0.00
& $25000^{\dagger}$ & 6.56 & 0.00
& $25000^{\dagger}$ & 6.56 & 0.00 \\

& KM, $\lambda=0.5$
& $0$
& $25000^{\dagger}$ & 6.53 & 0.00
& $25000^{\dagger}$ & 6.53 & 0.00
& $25000^{\dagger}$ & 6.53 & 0.00
& $25000^{\dagger}$ & 6.53 & 0.00
& $25000^{\dagger}$ & 6.53 & 0.00 \\

& pureAA$(m)$
& $0$
& $25000^{\dagger}$ & 17.05 & 100.00
& $25000^{\dagger}$ & 21.20 & 100.00
& 23096 & 32.74 & 100.00
& $25000^{\dagger}$ & 80.08 & 100.00
& $25000^{\dagger}$ & 147.40 & 100.00 \\

& resAA$(m)$
& $10^{-10}$
& \textbf{11854} & 4.68 & 100.00
& \textbf{12206} & 5.10 & 100.00
& 15467 & 7.19 & 100.00
& $25000^{\dagger}$ & 17.60 & 100.00
& $25000^{\dagger}$ & 19.34 & 100.00 \\

& GlobAA$(m)$
& $0$
& $25000^{\dagger}$ & 12.61 & 90.29
& 15396 & 8.93 & 84.39
& 12785 & 9.50 & 80.82
& 11874 & 13.92 & 78.35
& \textbf{11096} & 17.90 & 78.43 \\

& GlobAA$(m)$
& $10^{-10}$
& 12573 & \textbf{4.00} & 80.18
& 12292 & \textbf{3.98} & 81.92
& \textbf{11905} & \textbf{4.17} & 79.69
& \textbf{11703} & \textbf{5.26} & 80.28
& 11893 & \textbf{5.64} & 80.30 \\

\bottomrule
\end{tabular}
}
\end{table}
% \vspace{-0.2in}
\begin{figure}[htbp]
    \centering
    \includegraphics[width=0.99\textwidth]{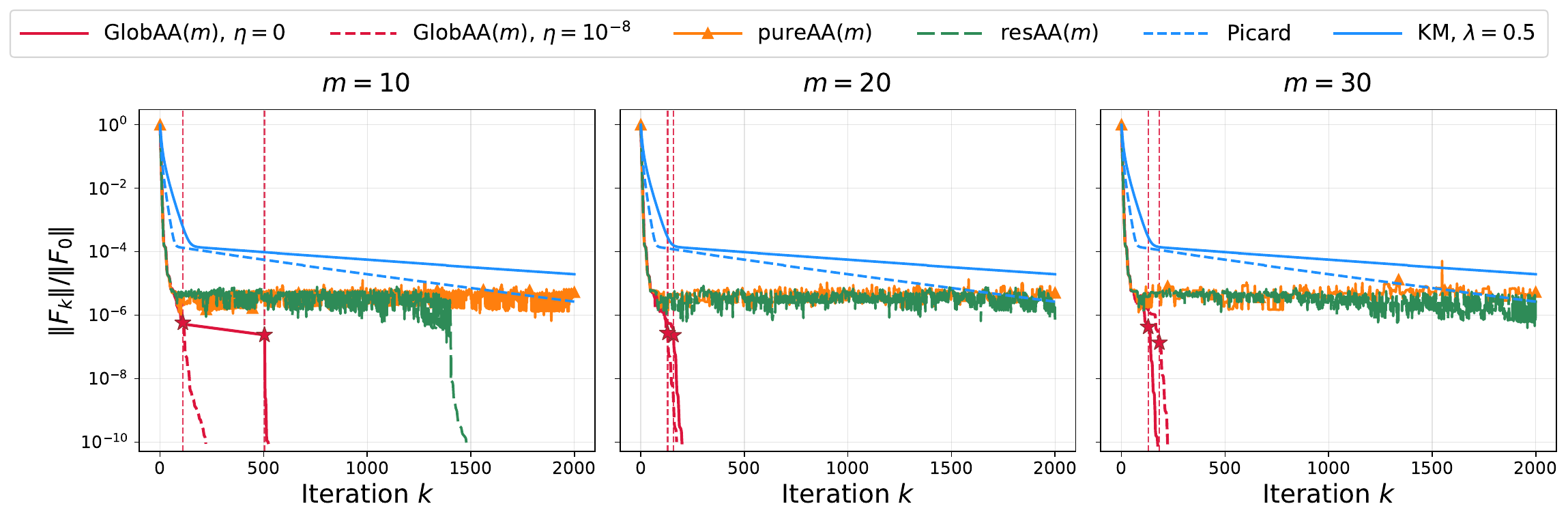}
    \caption{Convergence behavior of different algorithms for Exp. 1 (contractive).}\label{fig3.2-1}
\end{figure}
% \vspace{-0.2in}
\begin{figure}[htbp]
    \centering
    \includegraphics[width=0.99\textwidth]{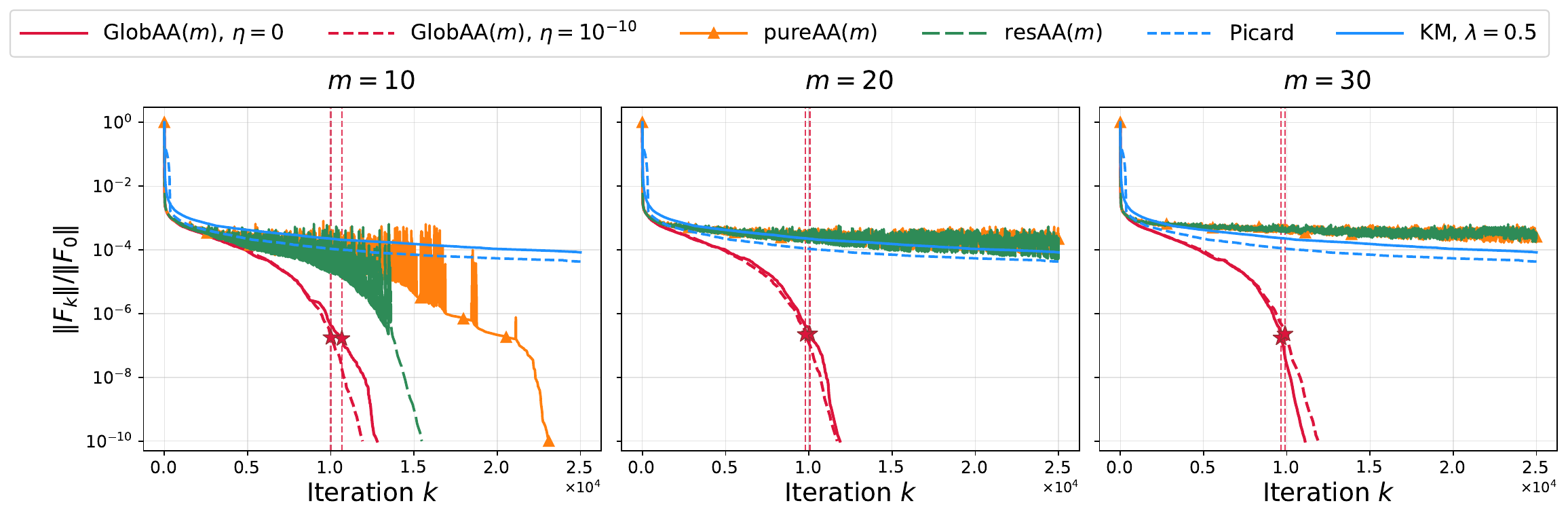}
    \caption{Convergence behavior of different algorithms for Exp. 2 (nonexpansive).}\label{fig3.2-2}
\end{figure}

\subsection{Nonlinear Helmholtz equation}\label{sec3.3}
In this subsection, we consider the nonlinear Helmholtz (NLH) equation, which arises in nonlinear optics and describes the propagation of continuous-wave laser beams through transparent dielectrics. The one-dimensional NLH system \cite{baruch2007high} can be expressed as: find $v: [0, 10] \rightarrow \mathbb{C}$, satisfying 
\begin{equation*}
    \begin{aligned}
    \frac{\mathrm{d}^2 v}{\mathrm{d} x^2} + k_0^2(1+\epsilon |v|^2)v &= 0, \quad 0<x<10,\\
    \frac{\mathrm{d} v}{\mathrm{d} x} + i k_0 v &= 2 i k_0, \quad x=0,\\
    \frac{\mathrm{d} v}{\mathrm{d} x} - i k_0 v &= 0, \quad x=10,
    \end{aligned}
\end{equation*}
where $v=v(x)$ denotes the unknown complex-valued scalar electric field, $k_0$ is the linear wave number in the surrounding medium, and $\epsilon>0$ is a material parameter determined by the linear refractive index and the Kerr coefficient. Although $\epsilon$ may generally depend on the spatial variable, it is assumed to be constant here.
Despite its one-dimensional formulation, this problem remains challenging for nonlinear solvers because of the cubic nonlinearity \cite{baruch2007high, baruch2009high}.

We discretize the following iteration using a second-order finite difference method: \begin{equation}\label{NLH}
    \begin{aligned}
    \frac{\mathrm{d}^2 v_{j+1}}{\mathrm{d} x^2} + k_0^2(1+\epsilon |v_{j}|^2)v_{j+1} &= 0, \quad 0<x<10,\\
    \frac{\mathrm{d} v_{j+1}}{\mathrm{d} x} + i k_0 v_{j+1} &= 2 i k_0, \quad x=0,\\
    \frac{\mathrm{d} v_{j+1}}{\mathrm{d} x} - i k_0 v_{j+1} &= 0, \quad x=10.
    \end{aligned}
\end{equation}
Let $\u_j$ denote the discrete approximation of $v_j$, and let $G$ be the solution operator of the resulting discretized linear system. Then the iteration can be written as the fixed point iteration $\u_{j+1} = G(\u_j)$. 
By separating the real and imaginary parts, this complex-valued discrete fixed point problem can be equivalently represented in a real Euclidean space, consistent with the framework considered in Section~\ref{sec2}.

\textbf{Implementation details.} We set $k_0 = 8$ and $\epsilon = 0.2$ in \eqref{NLH}, and we use $N=2001$ for the discretized system. Following \cite{baruch2007high}, the initial vector $\u_0$ is obtained by discretizing $v_0= e^{i k_0 x}$.
We set $k_{\max} = 500$, ${\rm tol} = 10^{-10}$, $\lambda=0.5$ and $\gamma_k = 1 - \max\left\{\frac{1}{k+2}, \frac{W_k}{2 W_0}\right\}$ as in \eqref{eq_new_gamma} for GlobAA($m$). For FAA($m$), we use $\bar{\kappa}=10^8$ and test $c_s=0.1$ and $c_s=0.2$. 
We also test GlobAA($m$) using \eqref{eq3.2} with regularization parameter $\eta=10^{-10}$. 

\textbf{Experimental results.}
Fig.~\ref{fig3.3} and Table~\ref{tab3.3:comparison} report the convergence results of different algorithms with different memory sizes. It can be observed that GlobAA($m$) generally requires fewer iterations than the competing methods across the tested memory sizes. Moreover, GlobAA($m$) not only demonstrates strong robustness with respect to $m$, but also tends to perform better as $m$ increases.
Specifically, the Picard iteration stagnates for this problem, while pureAA($m$) is sensitive to the choice of the memory size \(m\). Moreover, for GlobAA($m$), the regularized variant with \(\eta=10^{-10}\) exhibits almost the same convergence behavior as the unregularized version, while significantly reducing the CPU time. 
This indicates that the regularized implementation is computationally more efficient without noticeably changing the overall iteration process.
\begin{figure}[htbp]
    \centering
    \includegraphics[width=0.99\textwidth]{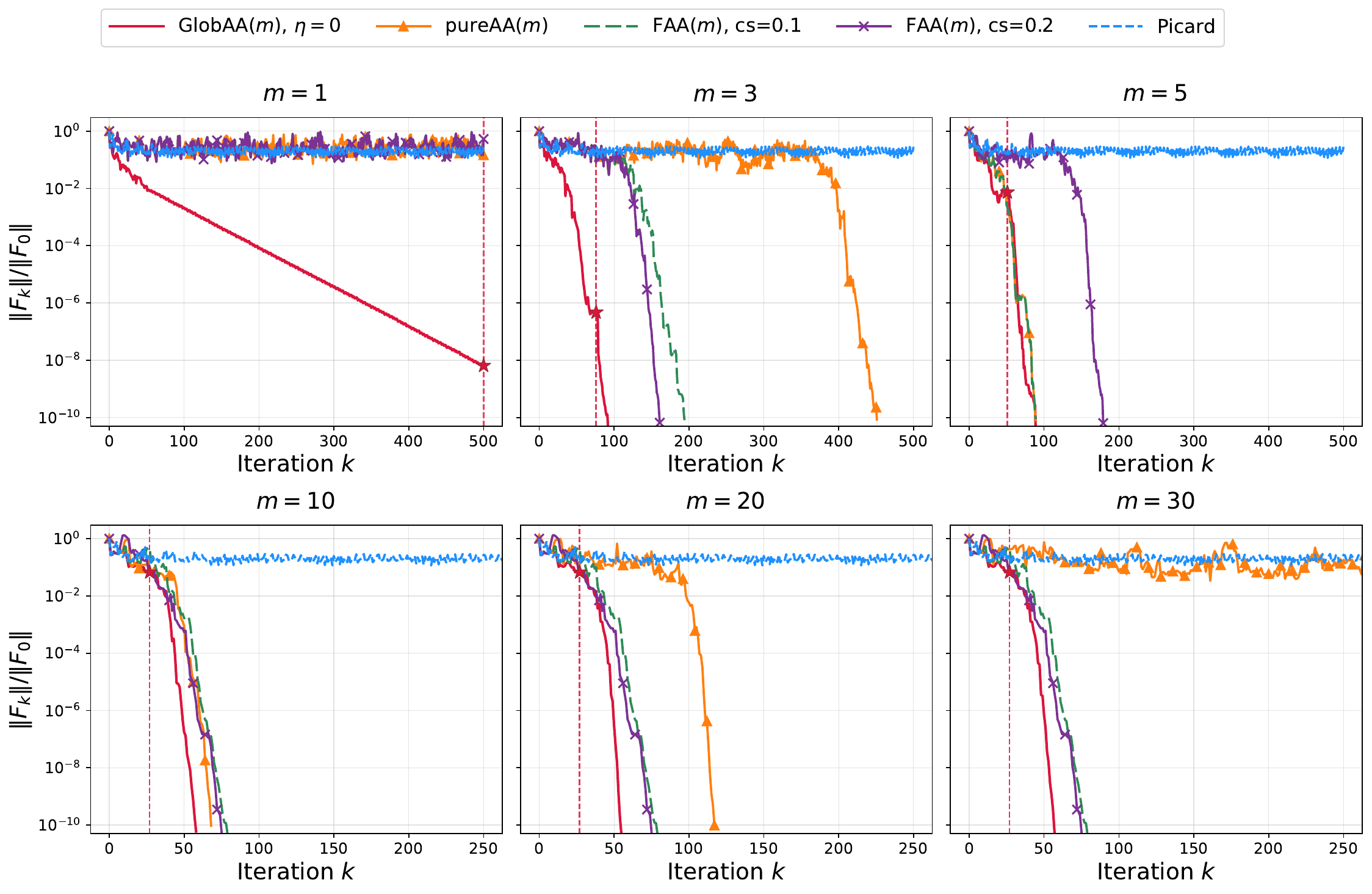}
    \caption{Convergence results of different algorithms with different memory sizes for NLH.}\label{fig3.3}
\end{figure}

\begin{table}[htbp]
\centering
\caption{Numerical results of different algorithms with different memory sizes for NLH }
\label{tab3.3:comparison}
\tiny
\setlength{\tabcolsep}{2pt}
\renewcommand{\arraystretch}{1.12}
\resizebox{\textwidth}{!}{
\begin{tabular}{lccc ccc ccc ccc ccc}
\toprule
\multirow{2}{*}{Algorithm}
& \multicolumn{3}{c}{$m=3$}
& \multicolumn{3}{c}{$m=5$}
& \multicolumn{3}{c}{$m=10$}
& \multicolumn{3}{c}{$m=20$}
& \multicolumn{3}{c}{$m=30$} \\
\cmidrule(lr){2-4}
\cmidrule(lr){5-7}
\cmidrule(lr){8-10}
\cmidrule(lr){11-13}
\cmidrule(lr){14-16}
& Iter. & Time(s) & AA(\%)
& Iter. & Time(s) & AA(\%)
& Iter. & Time(s) & AA(\%)
& Iter. & Time(s) & AA(\%)
& Iter. & Time(s) & AA(\%) \\
\midrule

Picard
& $500^{\dagger}$ & 0.29 & 0.00
& $500^{\dagger}$ & 0.29 & 0.00
& $500^{\dagger}$ & 0.29 & 0.00
& $500^{\dagger}$ & 0.29 & 0.00
& $500^{\dagger}$ & 0.29 & 0.00 \\

pureAA($m$)
& 451 & 0.48 & 100.00
& \textbf{89} & 0.11 & 100.00
& 68 & 0.16 & 100.00
& 117 & 0.54 & 100.00
& 341 & 2.62 & 100.00 \\

FAA($m$), $c_s=0.1$
& 194 & 0.26 & 100.00
& \textbf{89} & 0.13 & 100.00
& 79 & 0.13 & 100.00
& 79 & 0.13 & 100.00
& 79 & 0.14 & 100.00 \\

FAA($m$), $c_s=0.2$
& 161 & 0.21 & 100.00
& 179 & 0.26 & 100.00
& 75 & 0.15 & 100.00
& 75 & 0.15 & 100.00
& 75 & 0.16 & 100.00 \\

GlobAA($m$), $\eta=0$
& 92 & 0.09 & 78.26
& \textbf{89} & 0.11 & 85.39
& \textbf{58} & 0.10 & 75.86
& \textbf{55} & 0.13 & 74.55
& \textbf{57} & 0.15 & 75.44 \\

GlobAA($m$), $\eta=10^{-10}$
& \textbf{90} & \textbf{0.08} & 77.78
& 92 & \textbf{0.08} & 85.87
& \textbf{58} & \textbf{0.06} & 75.86
& \textbf{55} & \textbf{0.05} & 74.55
& \textbf{57} & \textbf{0.06} & 75.44 \\

\bottomrule
\end{tabular}
}
\end{table}

\subsection{2D Navier-Stokes equations}\label{sec3.4}
In this section, we consider the 2D lid-driven cavity flow governed by the steady Navier-Stokes equations (NSE) on the unit square $\Omega=(0,1)^2$, which can be expressed as follows:
\begin{equation*}
    \begin{aligned}
    \u \cdot \nabla \u + \nabla p - \mathrm{Re}^{-1}\Delta \u &= f, \\
    \nabla \cdot \u &= 0, \\
    \left.\u\right|_{\partial\Omega} &= \u_{{bc}}.
    \end{aligned}
\end{equation*}
The unknowns are the velocity field $\u$ and the pressure $p$, while $f$ denotes a prescribed external force. $\u_{bc}$ is a given (Dirichlet) boundary condition that is $(0,0)^\top$ on the sides and bottom and $(1,0)^\top$ on the lid. The parameter \rm{Re} denotes the Reynolds number, which is inversely proportional to the kinematic viscosity.

The Picard iteration for the NSE is given by (suppressing the spatial discretization)
\begin{equation}\label{2d_NS}
\begin{aligned}
\u_j \cdot \nabla {\u}_{j+1}
+ \nabla p_{j+1}
- \mathrm{Re}^{-1}\Delta {\u}_{j+1}
&= {f}, \\
\nabla \cdot {\u}_{j+1}
&= 0, \\
\left.{\u}_{j+1}\right|_{\partial\Omega}
&= {\u}_{{bc}}.
\end{aligned}
\end{equation}
The system above defines a fixed point iteration with $\u_{k+1} = G(\u_k)$, where $G$ is the solution operator for a spatial discretization of \eqref{2d_NS}. Following \cite{pollock2023filtering}, all algorithms are initialized with $\u_0 = G(0)$, which is the Stokes solution associated with the given problem data. In our experiments, there is no forcing ($f=0$).

\textbf{Implementation details.} In this experiment, we set $k_{\max} = 500$ for $\mathrm{Re}=5000$ and 8000, and $k_{\max} = 100$ for $\mathrm{Re}=10000$. Let ${\rm tol} = 10^{-10}$, $\lambda=0.5$ and $\gamma_k = 1 - \max\left\{\frac{1}{k+2}, \frac{W_k}{2 W_0}\right\}$ as in \eqref{eq_new_gamma} for GlobAA($m$). 
For FAA($m$), we use $\bar{\kappa}=10^8$ and test $c_s=0.2$ and $c_s=1/\sqrt{2}$.
% All other algorithms use the default parameters as mentioned in their paper. 
Following the discretization setting in \cite{pollock2023filtering}, we use grad-div stabilized (with parameter 1) Taylor--Hood $(P_2,P_1)$ finite elements for the spatial discretization. The computational mesh is constructed from a uniform $N\times N$ triangulation of the unit square and is further refined once around the edges (within $0.1$ from the boundary). Different values of $N$ yield different degrees of freedom (DOFs). Similar to subsection~\ref{sec3.3}, we also run GlobAA($m$) using \eqref{eq3.2} with regularization parameter $\eta = 10^{-10}$. 
The cost of computing the common initial iterate $G(0)$ is excluded from the reported CPU times.

\textbf{Experimental results.}
Fig.~\ref{fig3.4-1} and Table~\ref{tab3.5:cavity} show that the Picard iteration stagnates in all tested cases, whereas the AA-based methods successfully reduce the residual to the prescribed tolerance. For clarity, only the first 100 iterations are shown in Fig.~\ref{fig3.4-1}.
GlobAA($m$) requires the fewest or nearly the fewest iterations across different memory sizes and remains robust across the tested combinations of Reynolds numbers and problem sizes.
Moreover, the regularized variant with $\eta=10^{-10}$ exhibits almost identical convergence behavior while generally reducing the computational time. The residual curves further confirm the fast convergence of GlobAA($m$), while Fig.~\ref{fig3.4-2} displays the expected dominant cavity vortex and corner recirculation structures at different Reynolds numbers.

\begin{figure}[htbp]
    \centering
    \includegraphics[width=0.99\textwidth]{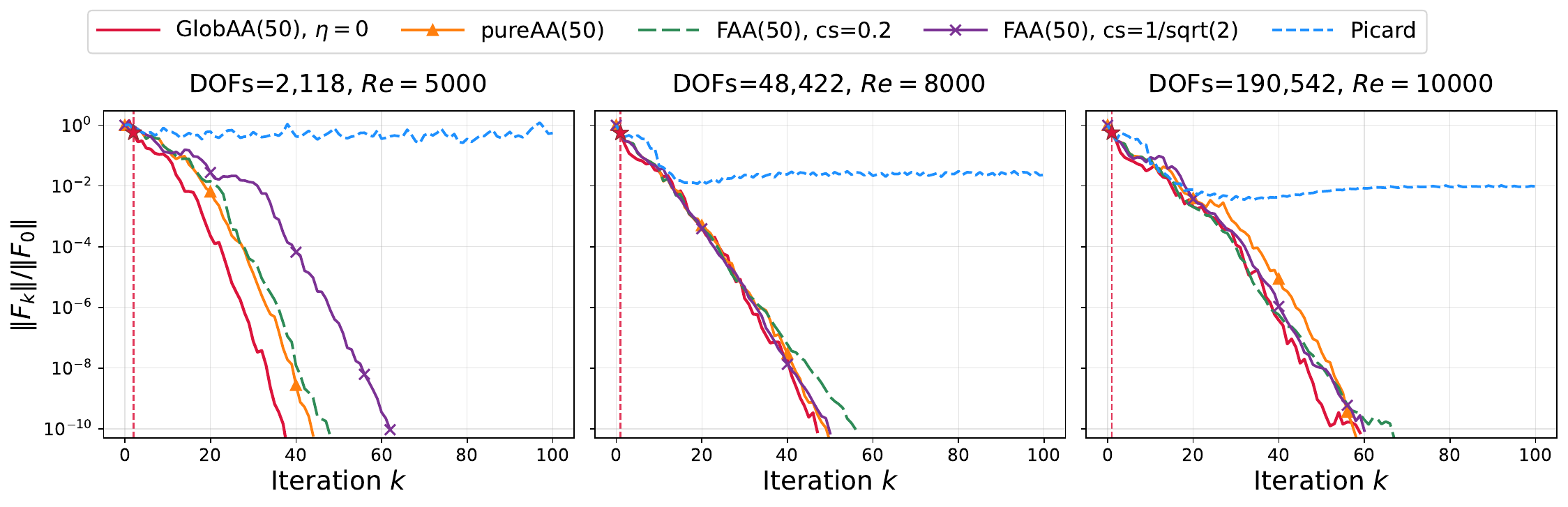}
    \caption{Convergence histories of different algorithms with $m=50$ for NSE under different Reynolds numbers and discretization sizes.}\label{fig3.4-1}
\end{figure}

\begin{figure}[htbp]
    \centering
    \includegraphics[width=0.99\textwidth]{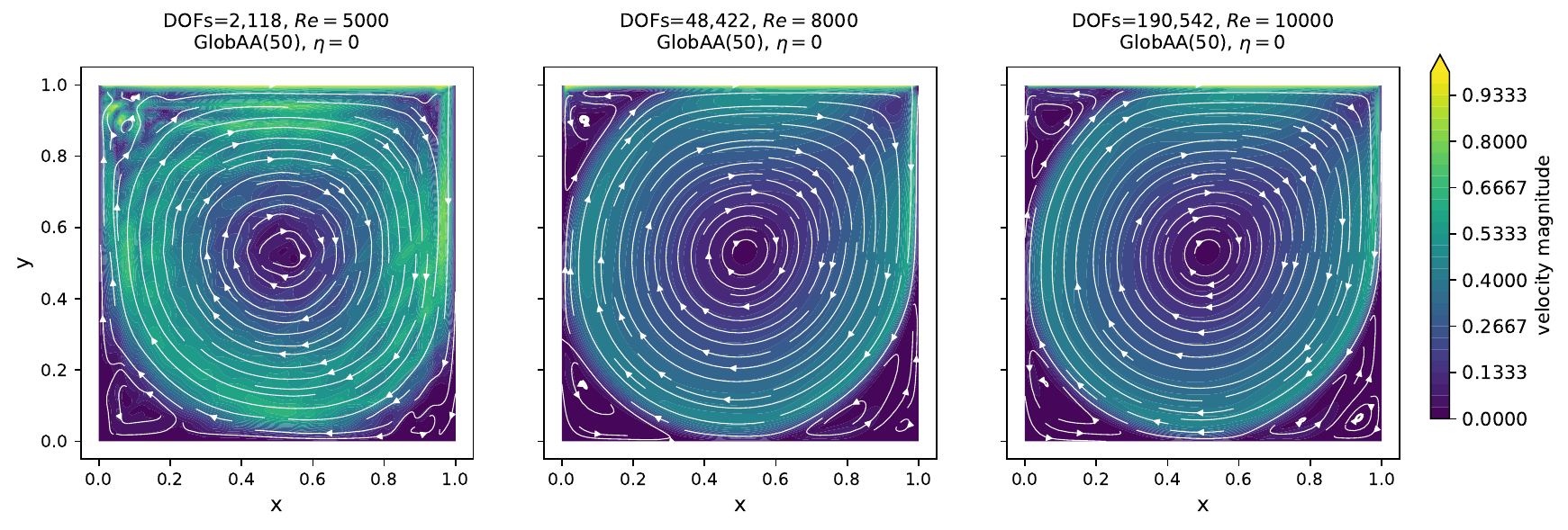}
    \caption{Solutions of the steady 2D-NSE driven cavity problem for different parameters.}\label{fig3.4-2}
\end{figure}

% 统一框架
\begin{table}[htbp]
\centering
\caption{Numerical results of different algorithms for NSE with different parameters}
\label{tab3.5:cavity}
\tiny
\setlength{\tabcolsep}{2pt}
\renewcommand{\arraystretch}{1.12}

\resizebox{\textwidth}{!}{
\begin{tabular}{lllccc ccc ccc ccc}
\toprule
\multirow{2}{*}{Parameters}
& \multirow{2}{*}{DOFs}
& \multirow{2}{*}{Algorithm}
& \multicolumn{3}{c}{$m=10$}
& \multicolumn{3}{c}{$m=20$}
& \multicolumn{3}{c}{$m=30$}
& \multicolumn{3}{c}{$m=50$} \\
\cmidrule(lr){4-6}
\cmidrule(lr){7-9}
\cmidrule(lr){10-12}
\cmidrule(lr){13-15}
&
&
& Iter. & Time(s) & AA(\%)
& Iter. & Time(s) & AA(\%)
& Iter. & Time(s) & AA(\%)
& Iter. & Time(s) & AA(\%) \\
\midrule

% Re = 5000, N = 10
\multirow{6}{*}{\shortstack{$\mathrm{Re}=5000$\\ $N=10$}}
& \multirow{6}{*}{2,118}
& Picard
& $500^{\dagger}$ & 7.55 & 0.0
& $500^{\dagger}$ & 7.55 & 0.0
& $500^{\dagger}$ & 7.55 & 0.0
& $500^{\dagger}$ & 7.55 & 0.0 \\

&
& pureAA($m$)
& 46 & 0.77 & 100.0
& 43 & 0.76 & 100.0
& 42 & 0.81 & 100.0
& 44 & 0.88 & 100.0 \\

&
& FAA($m$), $c_s=0.2$
& 48 & 0.77 & 100.0
& 48 & 0.78 & 100.0
& 48 & 0.82 & 100.0
& 48 & 0.81 & 100.0 \\

&
& FAA($m$), $c_s=1/\sqrt{2}$
& 62 & 1.00 & 100.0
& 62 & 1.02 & 100.0
& 62 & 1.04 & 100.0
& 62 & 1.03 & 100.0 \\

&
& GlobAA($m$), $\eta=0$
& \textbf{41} & 0.72 & 95.1
& \textbf{37} & 0.68 & 94.6
& \textbf{37} & 0.72 & 94.6
& \textbf{38} & 0.72 & 94.7 \\

&
& GlobAA($m$), $\eta=10^{-10}$
& \textbf{41} & \textbf{0.64} & 95.1
& \textbf{37} & \textbf{0.63} & 94.6
& \textbf{37} & \textbf{0.65} & 94.6
& \textbf{38} & \textbf{0.67} & 94.7 \\

\midrule

% Re = 8000, N = 50
\multirow{6}{*}{\shortstack{$\mathrm{Re}=8000$\\ $N=50$}}
& \multirow{6}{*}{48,422}
& Picard
& $500^{\dagger}$ & 900.23 & 0.0
& $500^{\dagger}$ & 900.23 & 0.0
& $500^{\dagger}$ & 900.23 & 0.0
& $500^{\dagger}$ & 900.23 & 0.0 \\

&
& pureAA($m$)
& 54 & 99.51 & 100.0
& \textbf{49} & \textbf{89.78} & 100.0
& 48 & 90.33 & 100.0
& 50 & 92.24 & 100.0 \\

&
& FAA($m$), $c_s=0.2$
& 56 & 100.35 & 100.0
& 56 & 100.02 & 100.0
& 56 & 101.43 & 100.0
& 56 & 102.12 & 100.0 \\

&
& FAA($m$), $c_s=1/\sqrt{2}$
& 55 & 98.18 & 100.0
& 50 & 90.28 & 100.0
& 50 & 89.82 & 100.0
& 50 & 90.22 & 100.0 \\

&
& GlobAA($m$), $\eta=0$
& \textbf{52} & 95.57 & 98.1
& 50 & 93.57 & 98.0
& \textbf{47} & 89.63 & 97.9
& \textbf{47} & 89.83 & 97.9 \\

&
& GlobAA($m$), $\eta=10^{-10}$
& \textbf{52} & \textbf{94.72} & 98.1
& 50 & 90.80 & 98.0
& \textbf{47} & \textbf{86.02} & 97.9
& 48 & \textbf{87.97} & 97.9 \\

\midrule

% Re = 10000, N = 100
\multirow{6}{*}{\shortstack{$\mathrm{Re}=10000$\\ $N=100$}}
& \multirow{6}{*}{190,542}
& Picard
& $100^{\dagger}$ & 1596.15 & 0.0
& $100^{\dagger}$ & 1596.15 & 0.0
& $100^{\dagger}$ & 1596.15 & 0.0
& $100^{\dagger}$ & 1596.15 & 0.0 \\

&
& pureAA($m$)
& \textbf{62} & \textbf{1010.64} & 100.0
& 59 & 939.83 & 100.0
& 57 & 916.28 & 100.0
& \textbf{58} & \textbf{931.18} & 100.0 \\

&
& FAA($m$), $c_s=0.2$
& 67 & 1073.25 & 100.0
& 67 & 1061.83 & 100.0
& 67 & 1060.42 & 100.0
& 67 & 1069.25 & 100.0 \\

&
& FAA($m$), $c_s=1/\sqrt{2}$
& 64 & 1027.11 & 100.0
& 60 & 958.57 & 100.0
& 60 & 955.42 & 100.0
& 60 & 949.74 & 100.0 \\

&
& GlobAA($m$), $\eta=0$
& 63 & 1024.09 & 98.4
& \textbf{54} & 891.68 & 98.1
& \textbf{52} & 857.28 & 98.1
& 59 & 963.06 & 98.3 \\

&
& GlobAA($m$), $\eta=10^{-10}$
& 63 & 1034.71 & 98.4
& \textbf{54} & \textbf{885.26} & 98.1
& \textbf{52} & \textbf{842.66} & 98.1
& 59 & 970.42 & 98.3 \\

\bottomrule
\end{tabular}
}
\end{table}

\section{Conclusion}
We propose a nonmonotone globalized Anderson acceleration framework with a novel globalization strategy, termed GlobAA($m$). Under mild assumptions, we establish convergence results that combine global convergence guarantees with local acceleration. More specifically, for contractive mappings, we prove global residual and iterate $r$-linear convergence for $m \ge 1$, and global residual $q$-linear convergence for $m=1$, with convergence factors no greater than the corresponding contraction factor. Moreover, we show that GlobAA($m$) reduces to pure Anderson acceleration after finitely many iterations. For nonexpansive mappings, we establish the global convergence of the fixed point residuals to zero. Finally, we construct a class-agnostic parameter selection rule that ensures global residual convergence in the nonexpansive setting while preserving linear convergence in the contractive setting.
Compared with existing globalization strategies, GlobAA($m$) requires little problem-specific prior knowledge and limited parameter tuning, making it broadly applicable. Extensive numerical experiments on various problems demonstrate its effectiveness and robustness, particularly in challenging scenarios where pure Anderson acceleration may fail to converge or stagnate.

% ---------- 参考文献 ----------
% 将自己的 .bib 文件上传到同一项目，取消下面第二行的注释，
% 并把 references 改成实际的 .bib 文件名（不含扩展名）。
\bibliographystyle{plain}
\bibliography{references}

@article{chen2019convergence,
  title={Convergence of the {EDIIS} algorithm for nonlinear equations},
  author={Chen, Xiaojun and Kelley, Carl T},
  journal={SIAM J. Sci. Comput.},
  volume={41},
  number={1},
  pages={A365--A379},
  year={2019},
  publisher={SIAM}
}

@article{bian2022anderson,
  title={Anderson acceleration for nonsmooth fixed point problems},
  author={Bian, Wei and Chen, Xiaojun},
  journal={SIAM J. Numer. Anal.},
  volume={60},
  number={5},
  pages={2565--2591},
  year={2022},
  publisher={SIAM}
}

@article{Anderson,
  title={Iterative procedures for nonlinear integral equations},
  author={Anderson, Donald G},
  journal={J. ACM},
  volume={12},
  number={4},
  pages={547--560},
  year={1965},
  publisher={ACM New York, NY, USA}
}

@article{walker2011anderson,
  title={Anderson acceleration for fixed-point iterations},
  author={Walker, Homer F and Ni, Peng},
  journal={SIAM J. Numer. Anal.},
  volume={49},
  number={4},
  pages={1715--1735},
  year={2011},
  publisher={SIAM}
}

@article{bian2021anderson,
  title={Anderson acceleration for a class of nonsmooth fixed-point problems},
  author={Bian, Wei and Chen, Xiaojun and Kelley, Carl T},
  journal={SIAM J. Sci. Comput.},
  volume={43},
  number={5},
  pages={S1--S20},
  year={2021},
  publisher={SIAM}
}

@article{TothKelley2015,
  title={Convergence analysis for {Anderson} acceleration},
  author={Toth, Alex and Kelley, Carl T},
  journal={SIAM J. Numer. Anal.},
  volume={53},
  number={2},
  pages={805--819},
  year={2015},
  publisher={SIAM}
}

@article{zhang2020globally,
  title={Globally convergent type-{I} {Anderson} acceleration for nonsmooth fixed-point iterations},
  author={Zhang, Junzi and O'Donoghue, Brendan and Boyd, Stephen},
  journal={SIAM J. Optim.},
  volume={30},
  number={4},
  pages={3170--3197},
  year={2020},
  publisher={SIAM}
}

@article{evans2020proof,
  title={A proof that {Anderson} acceleration improves the convergence rate in linearly converging fixed-point methods (but not in those converging quadratically)},
  author={Evans, Claire and Pollock, Sara and Rebholz, Leo G and Xiao, Mengying},
  journal={SIAM J. Numer. Anal.},
  volume={58},
  number={1},
  pages={788--810},
  year={2020},
  publisher={SIAM}
}

@article{pollock2021anderson,
  title={Anderson acceleration for contractive and noncontractive operators},
  author={Pollock, Sara and Rebholz, Leo G},
  journal={IMA J. Numer. Anal.},
  volume={41},
  number={4},
  pages={2841--2872},
  year={2021},
  publisher={Oxford University Press}
}

@article{ouyang2024descent,
  title={Descent Properties of an {Anderson} Accelerated Gradient Method with Restarting},
  author={Ouyang, Wenqing and Liu, Yang and Milzarek, Andre},
  journal={SIAM J. Optim.},
  volume={34},
  number={1},
  pages={336--365},
  year={2024},
  publisher={SIAM}
}

@article{ouyang2023nonmonotone,
  title={Nonmonotone globalization for {Anderson} acceleration via adaptive regularization},
  author={Ouyang, Wenqing and Tao, Jiong and Milzarek, Andre and Deng, Bailin},
  journal={J. Sci. Comput.},
  volume={96},
  number={1},
  pages={5},
  year={2023},
  publisher={Springer}
}

@article{fu2020anderson,
  title={Anderson Accelerated {Douglas--Rachford} Splitting},
  author={Fu, Anqi and Zhang, Junzi and Boyd, Stephen},
  journal={SIAM J. Sci. Comput.},
  volume={42},
  number={6},
  pages={A3560--A3583},
  year={2020},
  publisher={SIAM}
}

@inproceedings{mai2020anderson,
  title={Anderson acceleration of proximal gradient methods},
  author={Mai, Vien and Johansson, Mikael},
  booktitle={Proceedings of the 37th International Conference on Machine Learning, H. Daumé III and A. Singh, eds., vol. 119 of Proceedings of Machine Learning Research},
  pages={6620--6629},
  year={13-18 Jul 2020},
  organization={PMLR}
}

@article{de2022linear,
  title={Linear asymptotic convergence of Anderson acceleration: fixed-point analysis},
  author={De Sterck, Hans and He, Yunhui},
  journal={SIAM J. Matrix Anal. Appl.},
  volume={43},
  number={4},
  pages={1755--1783},
  year={2022},
  publisher={SIAM}
}

@article{krzysik2025asymptotic,
  title={Asymptotic convergence of restarted {Anderson} acceleration for certain normal linear systems},
  author={Krzysik, Oliver A and De Sterck, Hans and Smith, Adam},
  journal={SIAM J. Sci. Comput.},
  pages={S135--S160},
  year={2025},
  publisher={SIAM}
}

@article{pollock2019anderson,
  title={Anderson-accelerated convergence of {Picard} iterations for incompressible {Navier--Stokes} equations},
  author={Pollock, Sara and Rebholz, Leo G and Xiao, Mengying},
  journal={SIAM J. Numer. Anal.},
  volume={57},
  number={2},
  pages={615--637},
  year={2019},
  publisher={SIAM}
}

@article{banerjee2016periodic,
  title={Periodic {Pulay} method for robust and efficient convergence acceleration of self-consistent field iterations},
  author={Banerjee, Amartya S and Suryanarayana, Phanish and Pask, John E},
  journal={Chem. Phys. Lett.},
  volume={647},
  pages={31--35},
  year={2016},
  publisher={Elsevier}
}

@article{fang2009two,
  title={Two classes of multisecant methods for nonlinear acceleration},
  author={Fang, Haw-Ren and Saad, Yousef},
  journal={Numer. Linear Algebra Appl.},
  volume={16},
  number={3},
  pages={197--221},
  year={2009},
  publisher={Wiley Online Library}
}

@article{pollock2023filtering,
  title={Filtering for {Anderson} acceleration},
  author={Pollock, Sara and Rebholz, Leo G},
  journal={SIAM J. Sci. Comput.},
  volume={45},
  number={4},
  pages={A1571--A1590},
  year={2023},
  publisher={SIAM}
}

@article{baruch2007high,
  title={High-order numerical method for the nonlinear {Helmholtz} equation with material discontinuities in one space dimension},
  author={Baruch, Guy and Fibich, Gadi and Tsynkov, Semyon},
  journal={J. Comput. Phys.},
  volume={227},
  number={1},
  pages={820--850},
  year={2007},
  publisher={Elsevier}
}

@article{baruch2009high,
  title={A high-order numerical method for the nonlinear {Helmholtz} equation in multidimensional layered media},
  author={Baruch, Guy and Fibich, Gadi and Tsynkov, Semyon},
  journal={J. Comput. Phys.},
  volume={228},
  number={10},
  pages={3789--3815},
  year={2009},
  publisher={Elsevier}
}

@article{o2016conic,
  title={Conic optimization via operator splitting and homogeneous self-dual embedding},
  author={O'Donoghue, Brendan and Chu, Eric and Parikh, Neal and Boyd, Stephen},
  journal={J. Optim. Theory Appl.},
  volume={169},
  number={3},
  pages={1042--1068},
  year={2016},
  publisher={Springer}
}

@article{anderson2019comments,
  title={Comments on “{Anderson} acceleration, mixing and extrapolation”},
  author={Anderson, Donald GM},
  journal={Numer. Algorithms},
  volume={80},
  number={1},
  pages={135--234},
  year={2019},
  publisher={Springer}
}

@article{chupin2021convergence,
  title={Convergence analysis of adaptive {DIIS} algorithms with application to electronic ground state calculations},
  author={Chupin, Maxime and Dupuy, Mi-Song and Legendre, Guillaume and S{\'e}r{\'e}, Eric},
  journal={ESAIM Math. Model. Numer. Anal.},
  volume={55},
  number={6},
  pages={2785--2825},
  year={2021},
  publisher={EDP Sciences}
}

@article{pulay1980convergence,
  title={Convergence acceleration of iterative sequences. {The} case of {SCF} iteration},
  author={Pulay, P{\'e}ter},
  journal={Chem. Phys. Lett.},
  volume={73},
  number={2},
  pages={393--398},
  year={1980},
  publisher={Elsevier}
}

@article{sterck2021asymptotic,
  title={On the asymptotic linear convergence speed of {Anderson} acceleration, {Nesterov} acceleration, and nonlinear {GMRES}},
  author={De Sterck, Hans and He, Yunhui},
  journal={SIAM J. Sci. Comput.},
  volume={43},
  number={5},
  pages={S21--S46},
  year={2021},
  publisher={SIAM}
}

@article{sterck2012nonlinear,
  title={A nonlinear {GMRES} optimization algorithm for canonical tensor decomposition},
  author={De Sterck, Hans},
  journal={SIAM J. Sci. Comput.},
  volume={34},
  number={3},
  pages={A1351--A1379},
  year={2012},
  publisher={SIAM}
}

@article{peng2018anderson,
  title={Anderson acceleration for geometry optimization and physics simulation},
  author={Peng, Yue and Deng, Bailin and Zhang, Juyong and Geng, Fanyu and Qin, Wenjie and Liu, Ligang},
  journal={ACM Trans. Graph.},
  volume={37},
  number={4},
  year={2018},
  pages={42:1--42:14},
  publisher={ACM New York, NY, USA}
}

@article{lipnikov2013anderson,
  title={Anderson acceleration for nonlinear finite volume scheme for advection-diffusion problems},
  author={Lipnikov, Konstantin and Svyatskiy, Daniil and Vassilevski, Y},
  journal={SIAM J. Sci. Comput.},
  volume={35},
  number={2},
  pages={A1120--A1136},
  year={2013},
  publisher={SIAM}
}

@article{pratapa2016anderson,
  title={Anderson acceleration of the {Jacobi} iterative method: An efficient alternative to {Krylov} methods for large, sparse linear systems},
  author={Pratapa, Phanisri P and Suryanarayana, Phanish and Pask, John E},
  journal={J. Comput. Phys.},
  volume={306},
  pages={43--54},
  year={2016},
  publisher={Elsevier}
}

@article{an2017anderson,
  title={Anderson acceleration and application to the three-temperature energy equations},
  author={An, Hengbin and Jia, Xiaowei and Walker, Homer F},
  journal={J. Comput. Phys.},
  volume={347},
  pages={1--19},
  year={2017},
  publisher={Elsevier}
}

@article{both2019anderson,
  title={Anderson accelerated fixed-stress splitting schemes for consolidation of unsaturated porous media},
  author={Both, Jakub Wiktor and Kumar, Kundan and Nordbotten, Jan Martin and Radu, Florin Adrian},
  journal={Comput. Math. Appl.},
  volume={77},
  number={6},
  pages={1479--1502},
  year={2019},
  publisher={Elsevier}
}

@article{matveev2018anderson,
  title={Anderson acceleration method of finding steady-state particle size distribution for a wide class of aggregation--fragmentation models},
  author={Matveev, Sergey A and Stadnichuk, V I and Tyrtyshnikov, E E and Smirnov, Alexander P and Ampilogova, N V and Brilliantov, Nikolai V},
  journal={Comput. Phys. Commun.},
  volume={224},
  pages={154--163},
  year={2018},
  publisher={Elsevier}
}

@inproceedings{wei2022class,
  title={A class of short-term recurrence {Anderson} mixing methods and their applications},
  author={Wei, Fuchao and Bao, Chenglong and Liu, Yang},
  booktitle={International Conference on Learning Representations},
  year={2022}
}

@article{scieur2020regularized,
  title={Regularized nonlinear acceleration},
  author={Scieur, Damien and d’Aspremont, Alexandre and Bach, Francis},
  journal={Math. Program.},
  volume={179},
  number={1},
  pages={47--83},
  year={2020},
  publisher={Springer}
}

@article{zou2005regularization,
  title={Regularization and variable selection via the elastic net},
  author={Zou, Hui and Hastie, Trevor},
  journal={J. R. Stat. Soc. Ser. B Stat. Methodol.},
  volume={67},
  number={2},
  pages={301--320},
  year={2005},
  publisher={Oxford University Press}
}

@article{beck2009fast,
  title={A fast iterative shrinkage-thresholding algorithm for linear inverse problems},
  author={Beck, Amir and Teboulle, Marc},
  journal={SIAM J. Imaging Sci.},
  volume={2},
  number={1},
  pages={183--202},
  year={2009},
  publisher={SIAM}
}

@article{paige1982lsqr,
  title={{LSQR}: An algorithm for sparse linear equations and sparse least squares},
  author={Paige, Christopher C and Saunders, Michael A},
  journal={ACM Trans. Math. Softw.},
  volume={8},
  number={1},
  pages={43--71},
  year={1982},
  publisher={ACM New York, NY, USA}
}

% 如果原稿使用手动 thebibliography 环境，可直接将其粘贴到这里，
% 并删除上面的 bibliographystyle / bibliography 两行。

\end{document}